\documentclass[reqno]{amsart} 
\usepackage{amsfonts, amsmath, amsthm, amssymb, latexsym, graphicx}

\theoremstyle{plain}
\newtheorem{theorem}{Theorem}[section]
\newtheorem{corollary}[theorem]{Corollary}
\newtheorem{lemma}[theorem]{Lemma}
\newtheorem{proposition}[theorem]{Proposition}
\theoremstyle{definition}

\newtheorem{remark}[theorem]{Remark}

\theoremstyle{remark}

\numberwithin{equation}{section}

\title[]{Some lower bounds for distances of  roots of  classical orthogonal polynomials}

\author{Michael Voit}
\address{Fakult\"at Mathematik, Technische Universit\"at Dortmund,
          Vogelpothsweg 87,
          D-44221 Dortmund, Germany}
\email{michael.voit@math.tu-dortmund.de}

\subjclass[2010]{Primary  33C45; Secondary   60B20, 60B10,  }
\keywords{zeros of classical orthogonal polynomials, Hermite polynomials, Laguerre polynomials, Jacobi polynomials, 
  $\beta$-ensembles, Hermite ensembles,  Laguerre ensembles, Jacobi ensembles, freezing}

\begin{document}
\date{\today}

\begin{abstract}
  Let $(P_N)_{N\ge0}$ one of the classical sequences of orthogonal polynomials, i.e.,
 Hermite, Laguerre or Jacobi polynomials.
 For the roots $z_{1,N},\ldots, z_{N,N}$ of $P_N$ we derive lower estimates for 
 $\min_{i\ne j}|z_{i,N}-z_{j,N}|$ and the distances
  from the boundary of the orthogonality intervals. The proofs are based on recent
 results on the eigenvalues of
  the covariance matrices
  in central limit theorems for  associated $\beta$-random matrix ensembles
 where these  entities appear as entries, and
  where the eigenvalues of these matrices are known.
\end{abstract}

\maketitle

\section{Introduction}

Let $(P_N)_{N\ge0}$ one of the classical sequences of orthogonal polynomials, i.e., Hermite polynomials $(H_N)_{N\ge0}$, 
Laguerre polynomials   $L_N^{(\nu-1)}$ with $\nu>0$, or Jacobi polynomials
$(P_N^{(\alpha,\beta)})_{N\ge0}$ with $\alpha,\beta>-1$.
We  derive  lower estimates for the minimal distance $M_N:=\min_{1\le i<j\le N}(z_{i,N}-z_{j,N})$
for the ordered roots $z_{1,N}\ge\ldots\ge z_{N,N}$ of $P_N$ depending on $N$ and the other parameters $\nu,\alpha,\beta$.
The proof will be different from other estimates for $M_N$ in the literature
where often methods like Sturm's comparison theorem
are used; see e.g.~\cite{DJ, JT, K1, K2, H} and Ch. 6 of \cite{S}.

We here discuss the following approach:
For all three classes above, the roots of $P_N$ appear for  $\beta\to\infty$ in  limit formulas for  
classical $\beta$-random matrix ensembles (where this $\beta>0$ has no connection to the $\beta$ of the Jacobi polynomials)
 where $N$  is a fixed dimension parameter.
For the general background on  random matrix ensembles we refer to \cite{D, Me, F}. 
For  these classical ensembles there exist associated central limit theorems (CLTs) in the freezing regime
with explicit formulas for the covariance matrices $\Sigma_N$
  and their inverses $\Sigma_N^{-1}$
for the limit Gaussian distributions of dimension $N$; see \cite{ AHV, AV,  DE2, GK, V} in the Hermite and Laguerre case
and \cite{ AHV, HV} in the Jacobi case. In particular, by \cite{ AV, HV, V},   the entries of $\Sigma_N^{-1}$
are given  in terms of the distances $z_{i,N}-z_{j,N}$, and the eigenvalues of $\Sigma_N^{-1}$ have a  simple form.
This  allows to compute the traces $tr(\Sigma_N^{-k})$ of powers of $\Sigma_N^{-1}$. Some simple computations then
will lead to
the estimates for the minimal distances $M_N$ and for the distances of the roots from the
boundary of the orthogonality measure. In the Laguerre case, some connection between the roots of the polynomials
and the associated ensembles also appear in \cite{CD, K1, K2}.

We  mention that, depending on relations between the parameters and the order $N$ of the  Laguerre and Jacobi polynomials, our estimates
are better or worse than those in the literature mentioned above.
For the details see  Remarks  \ref{remark-lagu}, \ref{rem-lagu2}, \ref{rem-jac1}, and \ref{remark-jacobi-end} below. Furthermore,
in the Hermite case, our results are always worse than the estimate
 $$  z_{i,N}-z_{i+1,N}\ge 2/\sqrt N \quad\quad(i=1,\ldots,N)$$
which follows from \cite{K2}. We thus point out that the complicated  inequalities in Proposition \ref{distance-main-a},
Lemma \ref{distance-main-lemma-b}, and Lemma \ref{distance-main-lemma-jac} below should be seen as the main results of this paper
where our lower estimates for the minimal distances and the distances from the boundary are just corollaries from these
inequalities.

This paper is organized as follows:  Section 2  contains the Hermite case,
Section 3 is devoted to the  Laguerre case, and  the
Jacobi case is treated in Section 4.

We would like to thank the two referees for their very useful comments.

 \section{The Hermite case}

 We first recapitulate some facts on $\beta$-Hermite ensembles.

 Let  $\beta=2k>0$ be a constant (both parameters are used in the the literature;
see e.g. \cite{AV, D,  DE1, DE2, F, GK, Me, V}).
Define the associated  Hermite ensemble as a  random vector $X_{k,N}$
with values in the closed Weyl chamber
$C_N^A:=\{x\in \mathbb R^N: \quad x_1\ge x_2\ge\ldots\ge x_N\}$ with Lebesgue density
\begin{equation}\label{density-A}
c_k^A   e^{-\|x\|_2^2/2} \prod_{1\le i<j\le N}(x_i-x_j)^{2k}
\end{equation}
with the well-known normalization (see e.g.~the survey \cite{FW})
\begin{equation}\label{const-A}
 c_k^A:= \Bigl(\int_{C_N^A}  e^{-\|y\|_2^2/2}\cdot \prod_{i<j} (y_i-y_j)^{2k} \> dy\Bigr)^{-1}
 =\frac{N!}{(2\pi)^{N/2}} \cdot\prod_{j=1}^{N}\frac{\Gamma(1+k)}{\Gamma(1+jk)}.
\end{equation}
We now compare $X_{k,N}$ with $\sqrt{2k}\cdot {\bf z}_N\in C_N^A$, where
the entries of $$ {\bf z}_N=(z_{1,N},\ldots, z_{N,N})\in C_N^A$$ 
are
 the ordered zeros of the classical  Hermite polynomial $H_N$ 
where, as usual (see e.g.\cite{S}),   $(H_N)_{N\ge 0}$ is orthogonal w.r.t.
the density  $e^{-x^2}$.
The basis for our estimations for  the distances of the roots  will be the following CLT:

\begin{theorem}\label{clt-main-a}
For each $N\ge 2$, the random variables
$X_{k,N} -  \sqrt{2k}\cdot {\bf z}_N$
converge for $k\to\infty$ to the  $N$-dimensional centered  normal distribution $N(0,\Sigma_N)$
with the regular covariance matrix $\Sigma_N$ with $\Sigma_N^{-1}=S_N=(s_{i,j})_{i,j=1}^N$ and
\begin{equation}\label{covariance-matrix-A}
s_{i,j}:=\left\{ \begin{array}{r@{\quad\quad}l}  1+\sum_{l\ne i} (z_{i,N}-z_{l,N})^{-2} & \text{for}\quad i=j \\
   -(z_{i,N}-z_{j,N})^{-2} & \text{for}\quad i\ne j  \end{array}  \right.  . 
\end{equation}
The matrix $S_N$ has the eigenvalues $1,2,\ldots, N$.
\end{theorem}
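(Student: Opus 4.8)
The plan is to treat the two assertions separately. For the central limit theorem I would run a Laplace-type analysis of the density \eqref{density-A}. Writing its logarithm as $f(x)=-\|x\|_2^2/2+2k\sum_{i<j}\log(x_i-x_j)$, the unique interior critical point of $f$ on $C_N^A$ is characterized by $x_i=2k\sum_{l\ne i}(x_i-x_l)^{-1}$; under the substitution $x=\sqrt{2k}\,y$ this becomes the classical Stieltjes equilibrium relation $\sum_{l\ne i}(y_i-y_l)^{-1}=y_i$, whose solution in $C_N^A$ is $y={\bf z}_N$, so the mode sits at $\sqrt{2k}\,{\bf z}_N$ and fixes the centering. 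Expanding $f$ to second order about the mode and using $(x_i-x_j)^{-2}=(2k)^{-1}(z_{i,N}-z_{j,N})^{-2}$, the factors $2k$ cancel and the Hessian of $-f$ there becomes the $k$-independent matrix $S_N$ of \eqref{covariance-matrix-A}, while the cubic and higher terms carry negative powers of $\sqrt{2k}$ and vanish as $k\to\infty$; hence $X_{k,N}-\sqrt{2k}\,{\bf z}_N$, already at the natural $O(1)$ scale, converges to $N(0,S_N^{-1})$. The one genuinely technical point — the tightness and the uniform control of the density away from the mode needed to upgrade this formal expansion to convergence in distribution — is exactly what the references \cite{AV,V} supply, and I would invoke it rather than reprove it.

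For the eigenvalue statement I would give a self-contained algebraic argument. Write $S_N=I+A_N$, where $A_N$ is the symmetric matrix with $(A_N)_{ii}=\sum_{l\ne i}(z_{i,N}-z_{l,N})^{-2}$ and $(A_N)_{ij}=-(z_{i,N}-z_{j,N})^{-2}$, so that for any vector of the form $p({\bf z}_N):=(p(z_{1,N}),\ldots,p(z_{N,N}))^\top$ one has $(A_N\,p({\bf z}_N))_i=\sum_{l\ne i}\frac{p(z_{i,N})-p(z_{l,N})}{(z_{i,N}-z_{l,N})^2}$. The heart of the proof is the claim that, for each $m\ge 0$, the monomial evaluation vector ${\bf z}_N^{(m)}:=(z_{1,N}^m,\ldots,z_{N,N}^m)^\top$ satisfies $A_N\,{\bf z}_N^{(m)}=m\,{\bf z}_N^{(m)}+q({\bf z}_N)$ with $\deg q\le m-2$. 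To establish this I would factor $\frac{z_i^m-z_l^m}{z_i-z_l}=\sum_{r=0}^{m-1}z_i^{m-1-r}z_l^{r}$ and evaluate the inner sums $\sum_{l\ne i}z_l^{\,r}/(z_i-z_l)$ by splitting $z_l^r=(z_l^r-z_i^r)+z_i^r$: the Stieltjes relation $\sum_{l\ne i}(z_{i,N}-z_{l,N})^{-1}=z_{i,N}$ turns the second piece into $z_i^{r+1}$, contributing one factor $z_i^m$ for each of the $m$ values of $r$, while the first piece together with $\sum_{l\ne i}z_{l,N}^s=p_s-z_{i,N}^s$ (with $p_s$ the full power sum) contributes only terms of degree $\le m-2$ in $z_{i,N}$ whose coefficients are independent of $i$.

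Granting the claim, the conclusion is immediate. Since the $z_{i,N}$ are pairwise distinct, the vectors ${\bf z}_N^{(0)},\ldots,{\bf z}_N^{(N-1)}$ form a basis of $\mathbb R^N$ by the Vandermonde determinant, and the displayed relation says that in this ordered basis $A_N$ is triangular with diagonal entries $0,1,\ldots,N-1$. Hence the eigenvalues of $A_N$ are $0,1,\ldots,N-1$, and those of $S_N=I+A_N$ are $1,2,\ldots,N$.

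The main obstacle lies entirely in the bookkeeping of the claim — precisely, in checking that the coefficient of ${\bf z}_N^{(m)}$ is exactly $m$ and that no term of degree $m-1$ survives. The two identities above (the Stieltjes relation and $\sum_{l\ne i}z_{l,N}^s=p_s-z_{i,N}^s$) reduce this to a finite, mechanical verification, which one can cross-check in the first cases: $A_N{\bf z}_N^{(0)}=0$, $A_N{\bf z}_N^{(1)}={\bf z}_N^{(1)}$, and $A_N{\bf z}_N^{(2)}=2{\bf z}_N^{(2)}-(N-1){\bf z}_N^{(0)}$.
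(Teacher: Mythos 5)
Your proposal is correct, but it is genuinely more self-contained than what the paper does: the paper offers only ``Remarks on the proof,'' delegating the CLT to \cite{DE2,V,GK}, the formula (\ref{covariance-matrix-A}) to \cite{V}, and the eigenvalues $1,2,\ldots,N$ to \cite{AV}, with no argument reproduced in the text. Your Laplace-type analysis of the density (\ref{density-A}) correctly identifies the mode via the Stieltjes relation $\sum_{l\ne i}(z_{i,N}-z_{l,N})^{-1}=z_{i,N}$ (which follows from the Hermite ODE) and correctly produces $S_N$ as the Hessian of $-\log(\text{density})$ at $\sqrt{2k}\,{\bf z}_N$; like the paper, you still lean on the references for the tightness and uniform control needed to make the expansion rigorous, so on the CLT half the two treatments are comparable in depth. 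The real added value is your eigenvalue argument: I checked the key claim, and indeed
$$\sum_{l\ne i}\frac{z_{l,N}^{\,r}}{z_{i,N}-z_{l,N}}=z_{i,N}^{\,r+1}+r\,z_{i,N}^{\,r-1}-\sum_{s=0}^{r-1}p_s\,z_{i,N}^{\,r-1-s},$$
which after summation over $r=0,\ldots,m-1$ gives $A_N{\bf z}_N^{(m)}=m\,{\bf z}_N^{(m)}+\binom{m}{2}{\bf z}_N^{(m-2)}-(\cdots)$ with the correction of degree at most $m-2$ and $i$-independent coefficients; your cross-checks $A_N{\bf z}_N^{(1)}={\bf z}_N^{(1)}$ and $A_N{\bf z}_N^{(2)}=2{\bf z}_N^{(2)}-(N-1){\bf z}_N^{(0)}$ are right. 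Since the zeros are distinct, the Vandermonde argument makes $\{{\bf z}_N^{(0)},\ldots,{\bf z}_N^{(N-1)}\}$ a basis in which $A_N$ is triangular with diagonal $0,1,\ldots,N-1$, so $S_N=I_N+A_N$ has eigenvalues $1,\ldots,N$ as claimed. This buys a short, verifiable in-paper proof of the one fact (the spectrum of $S_N$) on which all the subsequent trace estimates of Section 2 actually rest, whereas the paper leaves it as an external citation.
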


\begin{proof}[Remarks on the proof] The CLT was first derived by  Dumitriu, Edelman  \cite{DE2}  by using their tridiagonal random matrix models in \cite{DE1}
  with explicit formulas for  $\Sigma_N$. It was then reproved with  different methods in \cite{V} with 
  Eq.~(\ref{covariance-matrix-A}) for  $\Sigma_N^{-1}$. The eigenvalues of $\Sigma_N^{-1}$  were given in \cite{AV}.
  We  remark that in \cite{AHV} the duality of finite  orthogonal polynomials in the sense of de Boor and Saff (see \cite{BS, I, VZ})
  was used 
  to compute the entries of  $\Sigma_N$  from   (\ref{covariance-matrix-A}).
  As a curiosity, the equations for the entries of  $\Sigma_N$ in \cite{AHV} look different from those in  \cite{DE2}.
  Numerical computation for small $N$ show that the entries in both cases are equal, but it seems to be difficult to verify these identities
   for all $N$ as in both representations complicated formulas depending on the  $z_{i,N}$ appear.
  Furthermore, \cite{GK} gives a third proof of the CLT with the  same formulas for   $\Sigma_N$
  as in \cite{AHV}.
\end{proof}

For our estimates for the distances of the roots we  need the following  fact:

\begin{lemma}\label{est-trace}
 Let $B=(b_{i,j})_{i,j=1,\ldots,N}\in \mathbb R^{N,N}$ be a  symmetric matrix. Then, for all integers $r\ge0$,
 \begin{equation}\label{inequality-trace1}
   tr(B^{2^r})\ge \sum_{i=1}^N b_{i,i}^{2^r}.
 \end{equation}
 
 Moreover, if in addition $B$ has the eigenvalue $0$ with the eigenvector $(1,\ldots,1)^T$, then, for all integers $r\ge0$,
 \begin{equation}\label{inequality-trace2}
   tr(B^{2^r})\ge \Bigl(\frac{N}{N-1}\Bigr)^{2^r-1}\sum_{i=1}^N b_{i,i}^{2^r}.
 \end{equation}
\end{lemma}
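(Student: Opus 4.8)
The plan is to reduce both estimates to a single ``one-step'' bound for the diagonal entries of the square of a symmetric matrix, and then to iterate it $r$ times along the chain $B, B^2, B^4,\ldots, B^{2^{r-1}}$. Throughout write $\mathbf 1=(1,\ldots,1)^T$.

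First I would record the elementary fact that for any symmetric matrix $M=(m_{i,j})$ one has $(M^2)_{i,i}=\sum_k m_{i,k}^2\ge m_{i,i}^2$, obtained simply by dropping the off-diagonal terms of the nonnegative sum $\sum_k m_{i,k}^2$. Writing $B^{2^r}$ as the $r$-fold square of $B$, and noting that every intermediate power $B^{2^s}$ with $s\ge1$ is itself a square and hence positive semidefinite (so has nonnegative diagonal), I would apply this bound repeatedly with $M=B^{2^{s-1}}$ to obtain, by induction on $r$, the diagonal estimate $(B^{2^r})_{i,i}\ge b_{i,i}^{2^r}$. Summing over $i$ and using $tr(B^{2^r})=\sum_i (B^{2^r})_{i,i}$ then gives (\ref{inequality-trace1}). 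The base case $r=0$ is the equality $tr(B)=\sum_i b_{i,i}$; the only sign subtlety, that $b_{i,i}$ may be negative, occurs solely at the final squaring and is harmless because only even powers of $b_{i,i}$ appear.

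For the sharper bound (\ref{inequality-trace2}) I would strengthen the one-step estimate using the hypothesis $B\mathbf 1=0$. The key observation is that this hypothesis is inherited by every power, $B^{2^s}\mathbf 1=0$, so each $M=B^{2^s}$ is symmetric with vanishing row sums, $m_{i,i}=-\sum_{k\ne i}m_{i,k}$. By Cauchy--Schwarz, $m_{i,i}^2=\bigl(\sum_{k\ne i}m_{i,k}\bigr)^2\le (N-1)\sum_{k\ne i}m_{i,k}^2$, whence $\sum_{k\ne i}m_{i,k}^2\ge m_{i,i}^2/(N-1)$ and therefore $(M^2)_{i,i}=\sum_k m_{i,k}^2\ge \frac{N}{N-1}\,m_{i,i}^2$. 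This improves the one-step bound by exactly the factor $N/(N-1)$.

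Iterating this improved estimate along the same chain as before yields, for each $i$, $(B^{2^r})_{i,i}\ge \bigl(\tfrac{N}{N-1}\bigr)^{1+2+\cdots+2^{r-1}}b_{i,i}^{2^r}=\bigl(\tfrac{N}{N-1}\bigr)^{2^r-1}b_{i,i}^{2^r}$, the exponent $2^r-1$ being the geometric sum of the factors accumulated over the $r$ squarings. Summing over $i$ gives (\ref{inequality-trace2}). I expect the main (though modest) obstacle to be precisely the passage of the kernel condition $B\mathbf 1=0$ to all powers $B^{2^s}$ and the bookkeeping of the accumulated exponent; the Cauchy--Schwarz step itself is routine once the row-sum-zero structure has been exposed.
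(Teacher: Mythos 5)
Your proposal is correct and takes essentially the same route as the paper: the same one-step bound $(M^2)_{i,i}=\sum_k m_{i,k}^2\ge m_{i,i}^2$ iterated along the chain of squarings, and for the refined estimate the same use of the inherited kernel condition $B^{2^s}\mathbf 1=0$ together with Cauchy--Schwarz to gain a factor $N/(N-1)$ per step, accumulating to the exponent $2^r-1$. No gaps.
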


\begin{proof} Let $B^{2^r}=:(b_{i,j}^{(r)})_{i,j=1,\ldots,N}$. Then for  $r\ge0$ and $i=1,\ldots,N$,
  $$ b_{i,i}^{(r+1)} =\sum_{j=1}^N  (b_{i,j}^{(r)})^2\ge  (b_{i,i}^{(r)})^2.$$
  Hence, by induction,  $b_{i,i}^{(r)}\ge  b_{i,i}^{2^r}$. Summation over $i$ then yields (\ref{inequality-trace1}).

  Now assume in addition that $(1,\ldots,1)^T$ is an eigenvector of $B$ with eigenvalue $0$. Then this also holds
  for the matrices  $B^{2^r}$, i.e., for all $i$,
  $$b_{i,i}^{(r)}=-\sum_{j;j\ne i}b_{i,j}^{(r)}.$$
  A simple argument with the Cauchy-Schwarz inequality now shows that
  \begin{align}
    b_{i,i}^{(r+1)}   &=\sum_{i=1}^N  (b_{i,j}^{(r)})^2=  (b_{i,i}^{(r)})^2 +\sum_{j;j\ne i}(b_{i,j}^{(r)})^2 \notag\\
    &\ge \Bigl(1+\frac{1}{N-1}\Bigr) (b_{i,i}^{(r)})^2  =\frac{N}{N-1} (b_{i,i}^{(r)})^2.  \notag
    \end{align}
  This and induction imply that
  $$b_{i,i}^{(r)}\ge  \Bigl(\frac{N}{N-1}\Bigr)^{2^r-1} b_{i,i}^{2^r}.$$
 Summation  then yields (\ref{inequality-trace2}).
\end{proof}

With these ingredients we now derive the following result:

\begin{proposition}\label{distance-main-a}
  For all $N\ge2$ and $i=1,\ldots,N$,
  \begin{equation}\label{distance-main-a-ung}
\Bigl(\sum_{l; l\ne i} \frac{1}{(z_{i,N}-z_{l,N})^{2}}\Bigr)^2 + \sum_{l; l\ne i} \frac{1}{(z_{i,N}-z_{l,N})^{4}} \le 
    \frac{(N-1)^3}{N}.
    \end{equation}
\end{proposition}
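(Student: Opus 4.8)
The plan is to recognize the left-hand side of (\ref{distance-main-a-ung}) as a single diagonal entry of the square of the matrix $B := S_N - I$, and then to bound that entry sharply using the spectrum of $S_N$ supplied by Theorem \ref{clt-main-a}. The first observation I would record is that the all-ones vector $\mathbf{1} := (1,\ldots,1)^T$ is an eigenvector of $S_N$ for the eigenvalue $1$: summing the $i$-th row of $S_N$, the diagonal term $1+\sum_{l\ne i}(z_{i,N}-z_{l,N})^{-2}$ cancels against the off-diagonal terms $-\sum_{l\ne i}(z_{i,N}-z_{l,N})^{-2}$, leaving $S_N\mathbf{1}=\mathbf{1}$. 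Consequently $B\mathbf{1}=0$, and since $S_N$ is symmetric with the distinct eigenvalues $1,2,\ldots,N$, the symmetric matrix $B$ has eigenvalues $0,1,\ldots,N-1$, the eigenvalue $0$ being simple with eigenspace $\mathrm{span}(\mathbf{1})$.

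Next I would compute the diagonal of $B^2$. Since $B$ has diagonal entries $B_{i,i}=\sum_{l\ne i}(z_{i,N}-z_{l,N})^{-2}$ and off-diagonal entries $B_{i,j}=-(z_{i,N}-z_{j,N})^{-2}$, symmetry gives
\[
(B^2)_{i,i}=\sum_{j=1}^N B_{i,j}^2=\Bigl(\sum_{l\ne i}\frac{1}{(z_{i,N}-z_{l,N})^{2}}\Bigr)^2+\sum_{l\ne i}\frac{1}{(z_{i,N}-z_{l,N})^{4}},
\]
which is exactly the left-hand side of (\ref{distance-main-a-ung}). It therefore suffices to prove the uniform bound $(B^2)_{i,i}\le (N-1)^3/N$. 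For this I would decompose the standard basis vector $e_i$ along the kernel direction, writing $e_i=\tfrac1N\mathbf{1}+w_i$ with $w_i\perp\mathbf{1}$, so that $\|w_i\|^2=1-1/N=(N-1)/N$. Because $B\mathbf{1}=0$ and $B$ is symmetric, $(B^2)_{i,i}=\|Be_i\|^2=\|Bw_i\|^2$; and since $w_i$ lies in $\mathbf{1}^\perp$, on which $B$ has operator norm $N-1$, we obtain $\|Bw_i\|^2\le (N-1)^2\|w_i\|^2=(N-1)^3/N$, which is the claim.

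The only genuinely delicate point is getting the constant right. The naive estimate $(B^2)_{i,i}\le \lambda_{\max}(B)^2=(N-1)^2$ is off by precisely the factor $(N-1)/N$ demanded by the proposition, so extracting that factor is the crux: one must use that $e_i$ is not an arbitrary unit vector but has a fixed component $1/\sqrt N$ along the zero-eigenvector $\mathbf{1}/\sqrt N$, so that only its orthogonal part $w_i$, of squared length $(N-1)/N$, is seen by $B$. I expect this projection bookkeeping, rather than any analytic difficulty, to be the main thing to handle with care; everything else reduces to the eigenvalue input of Theorem \ref{clt-main-a} together with the row-sum identity $S_N\mathbf{1}=\mathbf{1}$.
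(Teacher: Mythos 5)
Your proof is correct, and it takes a genuinely different route from the paper. The paper also writes $S_N=I_N+A_N$ and identifies the left-hand side of (\ref{distance-main-a-ung}) with the diagonal entry $a_{i,i}^{(2)}$ of $A_N^2$, but it then bounds $a_{i,i}^{(2)}$ by comparing traces of high powers: it shows $tr(A_N^{2^r})\le (N-1)^{2^r+1}$ from the eigenvalues, proves a separate lemma (via Cauchy--Schwarz and the kernel vector $(1,\ldots,1)^T$) giving $tr((A_N^2)^{2^{r-1}})\ge (N/(N-1))^{2^{r-1}-1}\sum_i (a_{i,i}^{(2)})^{2^{r-1}}$, and extracts the pointwise bound by letting $r\to\infty$. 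Your argument replaces all of this with the single identity $(B^2)_{i,i}=\|Be_i\|^2=\|Bw_i\|^2$ and the operator-norm bound on $\mathbf{1}^\perp$, where the decisive factor $(N-1)/N$ arises transparently as $\|w_i\|^2=1-1/N$ rather than through the iterated trace inequality. Your version is shorter, avoids the limiting argument entirely, and makes explicit the row-sum computation $S_N\mathbf{1}=\mathbf{1}$ that the paper only signals with ``notice that the assumptions are satisfied''; the paper's trace machinery, on the other hand, is set up once and then reused verbatim in the Laguerre and Jacobi sections (where the kernel refinement is not available and only the cruder bound $\sum_i (a_{i,i}^{(2)})^{2^{r-1}}\le tr(A_N^{2^r})$ is used), though your projection argument would specialize just as easily to those cases as the estimate $\|Be_i\|^2\le\|B\|^2$.
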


\begin{proof} We write the matrix $S_N$ from  Theorem \ref{clt-main-a} as $S_N=I_N+A_N$ with the identity matrix $I_N$
  where, by  Theorem \ref{clt-main-a}, $A_N$ has the eigenvalues $0,1,\ldots, N-1$. Therefore, for all integers  $r\ge0$,
   $$tr(A_N^{2^r})= 0+1^{2^r}+2^{2^r}+\ldots+(N-1)^{2^r}\le(N-1)^{2^r+1}.$$
  On the other hand, if we write $A_N^2=(a_{i,j}^{(2)})_{i,j=1,\ldots,N}$, we obtain from (\ref{covariance-matrix-A}) that
  $$a_{i,i}^{(2)}=\Bigl(\sum_{l; l\ne i} \frac{1}{(z_{i,N}-z_{l,N})^{2}}\Bigr)^2 + \sum_{l; l\ne i} \frac{1}{(z_{i,N}-z_{l,N})^{4}}$$
for $i=1,\ldots,N$.
We now apply (\ref{inequality-trace2}) in Lemma \ref{est-trace} to the matrix $B:=A_N^2$ (notice that the assumptions are satisfied!)
and conclude that
\begin{equation}\label{est-tr-a}
  tr(A_N^{2^r})\ge \Bigl(\frac{N}{N-1}\Bigr)^{2^{r-1}-1} \sum_{i=1}^N(a_{i,i}^{(2)})^{2^{r-1}}.
\end{equation} 
Therefore, for all  integers  $r\ge0$,
$$ \frac{N-1}{N}\Bigl(\frac{N}{N-1}\Bigr)^{2^{r}}\sum_{i=1}^N\Bigl(a_{i,i}^{(2)}\Bigr)^{2^{r}} \le(N-1)\cdot ((N-1)^2)^{2^r}.$$
As the  condition $0\le x^{2^r}\le C$ for all $r\in\mathbb N$ with some constant $C>0$ implies that $x\le 1$, we conclude
from the terms with power $2^r$ that for all $i$,
\begin{equation}\label{final-ung-a}
 \frac{N}{N-1} a_{i,i}^{(2)}\le (N-1)^2.
\end{equation}
The equation for $a_{i,i}^{(2)}$ above now yields the first inequality. The second ineqality of the proposition  is trivial.
\end{proof}

Proposition \ref{distance-main-a} has the following  obvious consequences:

\begin{corollary} For all $i=1,\ldots,N$,
  \begin{equation}\label{final-ung-a2}
    \sum_{l: \> l\ne i} (z_{i,N}-z_{l,N})^{-4} \le \frac{(N-1)^3}{2N}\le \frac{(N-1)^2}{2}\end{equation}
  and
   \begin{equation}\label{final-ung-a3}
    \sum_{l: \> l\ne i} (z_{i,N}-z_{l,N})^{-2}\le\frac{(N-1)^{3/2}}{N^{1/2}}\le N-1. \end{equation} 
   In particular, for all $i<N$,
   \begin{equation}\label{final-ung-a4}
     z_{i,N}-z_{i+1,N}\ge \frac{(2N)^{1/4}}{(N-1)^{3/4}}\ge \frac{ 2^{1/4}}{(N-1)^{1/2}}.
\end{equation}
 \end{corollary}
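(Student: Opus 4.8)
The plan is to read off all three bounds directly from Proposition~\ref{distance-main-a}, whose left-hand side is a sum of two nonnegative quantities. Abbreviate $T_2 := \sum_{l\ne i}(z_{i,N}-z_{l,N})^{-2}$ and $T_4 := \sum_{l\ne i}(z_{i,N}-z_{l,N})^{-4}$, so that the proposition reads $T_2^2 + T_4 \le (N-1)^3/N$. Each of the three inequalities will come either from discarding one of these pieces or from isolating a single summand, so no new machinery is needed.

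For (\ref{final-ung-a2}) the only subtlety is the factor $2$: merely bounding $T_4\le T_2^2+T_4$ would lose it. Instead I would first note that, since each term $a_l:=(z_{i,N}-z_{l,N})^{-2}$ is nonnegative, expanding $T_2^2=\bigl(\sum_{l\ne i}a_l\bigr)^2=\sum_{l\ne i}a_l^2+\sum_{l\ne m}a_la_m$ shows $T_2^2\ge\sum_{l\ne i}a_l^2=T_4$. Feeding $T_4\le T_2^2$ into the proposition gives $2T_4\le T_2^2+T_4\le (N-1)^3/N$, that is $T_4\le (N-1)^3/(2N)$; the weaker bound $(N-1)^3/(2N)\le (N-1)^2/2$ is just $(N-1)/N\le 1$.

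For (\ref{final-ung-a3}) I would simply discard the nonnegative term $T_4$ in the proposition to obtain $T_2^2\le (N-1)^3/N$, take square roots, and then use $(N-1)/N\le 1$ for the final bound $N-1$. For (\ref{final-ung-a4}), the point is that for $i<N$ the neighbouring gap contributes the single summand $(z_{i,N}-z_{i+1,N})^{-4}$ to the nonnegative sum $T_4$, so $(z_{i,N}-z_{i+1,N})^{-4}\le T_4\le (N-1)^3/(2N)$ by (\ref{final-ung-a2}); inverting and taking fourth roots yields the main estimate, and the concluding simplification $(2N)^{1/4}/(N-1)^{3/4}\ge 2^{1/4}/(N-1)^{1/2}$ reduces to $N^{1/4}\ge (N-1)^{1/4}$.

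The only genuinely nonobvious step is recovering the factor $2$ in (\ref{final-ung-a2}) through the observation $T_4\le T_2^2$; everything else is bookkeeping, which is presumably why the author labels these consequences as ``obvious.''
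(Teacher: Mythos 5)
Your proof is correct and is precisely the intended derivation: the paper gives no explicit proof (labelling the corollary an ``obvious consequence'' of Proposition \ref{distance-main-a}), and your reading-off of the three bounds, including the recovery of the factor $2$ via $T_4\le T_2^2$ for nonnegative terms, is the natural and evidently intended argument.
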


\begin{remark}
  \begin{enumerate}
  \item[\rm{(1)}] It is well known that the order $O(N^{-1/2})$ in (\ref{final-ung-a4}), and thus in (\ref{final-ung-a2})
    and (\ref{distance-main-a-ung}), is sharp. However, the constant in (\ref{final-ung-a4}) is not optimal.
    For instance,
    it can be shown in an elementary way that $  z_{i,N}-z_{i+1,N}\ge 2/\sqrt N$ for all $i=1,\ldots,N$; 
    see \cite{K2}. On the other hand, for $N=2$, $z_{1,2}-z_{2,2}=\sqrt 2$ (see (5.5.4) in \cite{S}), i.e.,
    (\ref{final-ung-a4}) here is an equality.
  \item[\rm{(2)}] If we sum the left hand sides of  (\ref{distance-main-a-ung}) and  (\ref{final-ung-a3}) over $i$,
    we obtain
 \begin{equation}\label{final-g-a1}
   \sum_{i,l: \> i\ne l} (z_{i,N}-z_{l,N})^{-2} =N(N-1)/2\end{equation}
   and
   \begin{align}\label{final-g-a2}
     \sum_{i=1}^{N}&\Bigl(\sum_{l; l\ne i} \frac{1}{(z_{i,N}-z_{l,N})^{2}}\Bigr)^2 + \sum_{i,l; l\ne i} \frac{1}{(z_{i,N}-z_{l,N})^{4}} \notag\\
     &=
     0^2+1^2+\ldots+(N-1)^2 =
    \frac{N(N-1)(2N-1)}{6}.
   \end{align}
   In fact,  by Theorem \ref{clt-main-a}, both sides of (\ref{final-g-a1}) are equal to $tr(S_N-I_N)$, and
   both sides of (\ref{final-g-a2}) are equal to  $tr((S_N-I_N)^2)$.
   (\ref{final-g-a1}) and (\ref{final-g-a2}) show that  (\ref{final-ung-a3}) and  (\ref{distance-main-a-ung})
   are sharp up to a factor of size 2 and 3 respectively.
  \end{enumerate}
 \end{remark}

\section{The Laguerre case}

We  start with some facts on  $\beta$-Laguerre ensembles. 
Let $k_1,k_2>0$ be constants and $k=(k_1,k_2)$. Define
 the associated  Laguerre ensemble as random vector $X_{k,N}$ with values in the closed Weyl chamber
 $$C_N^B:=\{x\in \mathbb R^N: \quad x_1\ge x_2\ge\ldots\ge x_N\ge0\}$$
with the Lebesgue density
\begin{equation}\label{density-B}
c_k^B   e^{-\|x\|_2^2/2}\cdot \prod_{i<j}(x_i^2-x_j^2)^{2k_2}\cdot \prod_{i=1}^N x_i^{2k_1}\end{equation}
with the well-known normalization
\begin{align}\label{const-B}
  c_k^B:=&\Bigl(\int_{C_N^B}  e^{-\|y\|_2^2/2}\cdot \prod_{i<j}(x_i^2-x_j^2)^{2k_2}\cdot \prod_{i=1}^N x_i^{2k_1} \> dy\Bigr)^{-1}
 \\
=&\frac{N!}{2^{N(k_1+(N-1)k_2-1/2)}} \cdot\prod_{j=1}^{N}\frac{\Gamma(1+k_2)}{\Gamma(1+jk_2)\Gamma(\frac{1}{2}+k_1+(j-1)k_2)}.
\notag\end{align}
 We rewrite $k$ as
 $(k_1,k_2)=(\nu\cdot\beta,\beta)$ with $\nu,\beta>0$. 
For  $\nu>0$ fixed and  $\beta\to\infty$ we shall compare $X_{k,N}$ with the  vector
$\sqrt{\beta}\cdot {\bf r}_N\in C_N^B$ where ${\bf r}_N=(r_1,\ldots,r_N)$ satisfies
$(r_1^2,\ldots,r_N^2)=2{\bf z}_N$ where the entries of ${\bf z}_N=(z_{1,N},\ldots,z_{N,N})\in C_N^B$ are
the ordered zeros of the classical  Laguerre polynomial   $L_N^{(\nu-1)}$. Recapitulate that, as usual,
the  $L_N^{(\nu-1)}$ are orthogonal  w.r.t. the density $e^{-x}\cdot x^{\nu-1}$ on $]0,\infty[$  for $\nu>0$.
The following CLT will be crucial for  our estimates:

\begin{theorem}\label{clt-main-b}
Let $N\ge 1$ be an integer and $\nu>0$. Then 
$X_{k,N} -  \sqrt{\beta}\cdot {\bf r}_N$
converges for  $\beta\to\infty$ to the centered $N$-dimensional normal distribution $N(0,\Sigma_N)$
with the regular covariance matrix $\Sigma_N$ with  $\Sigma_N^{-1}=S_N=(s_{i,j})_{i,j=1,\ldots,N}$ with
\begin{equation}\label{covariance-matrix-B}
  s_{i,j}:=\left\{ \begin{array}{r@{\quad\quad}l}  1+ \frac{\nu}{z_{i,N}}+
    2\sum_{l; l\ne i}\frac{ z_{i,N}+ z_{l,N}}{ (z_{i,N}- z_{l,N})^2} &
 \text{for}\quad i=j \\
 -4\frac{\sqrt{z_{i,N}z_{j,N}} }{ (z_{i,N}- z_{j,N})^2} & \text{for}\quad i\ne j  \end{array}  \right.  . 
\end{equation}
The matrix $S_N$ has the eigenvalues $2,4,\ldots, 2N$.
\end{theorem}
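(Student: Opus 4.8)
The plan is to split the statement into two independent parts: the central limit theorem together with the explicit formula (\ref{covariance-matrix-B}) for $\Sigma_N^{-1}$, and the determination of the spectrum of $S_N$. For the first part I would run a Laplace (saddle-point) analysis on the explicit density (\ref{density-B}). Writing $k=(\nu\beta,\beta)$ and substituting $x=\sqrt\beta\,u$, the density of $u$ is proportional to $\exp(-\beta E(u))$ up to $u$-independent factors, where
\begin{equation*}
  E(u)=\frac{\|u\|_2^2}{2}-2\sum_{i<j}\log|u_i^2-u_j^2|-2\nu\sum_{i=1}^N\log u_i .
\end{equation*}
As $\beta\to\infty$ the measure concentrates at the unique minimizer $u^\ast$ of $E$ on the interior of $C_N^B$, and the rescaled fluctuations $X_{k,N}-\sqrt\beta\,u^\ast$ become Gaussian with inverse covariance equal to the Hessian $\mathrm{Hess}\,E(u^\ast)$. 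This is the general mechanism behind such freezing CLTs.

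To identify $u^\ast$ and the covariance I would compute the first two derivatives of $E$. The stationarity equations $\partial_m E=0$ read $u_m-4u_m\sum_{j\ne m}(u_m^2-u_j^2)^{-1}-2\nu/u_m=0$; substituting $u_m^2=2z_{m,N}$ turns these into $2z_{m,N}\sum_{j\ne m}(z_{m,N}-z_{j,N})^{-1}=z_{m,N}-\nu$, which are precisely the electrostatic equilibrium relations for the zeros of $L_N^{(\nu-1)}$ obtained from the Laguerre equation $xy''+(\nu-x)y'+Ny=0$ at a simple zero. Hence $u^\ast=\mathbf r_N$, which fixes the centering $\sqrt\beta\,\mathbf r_N$. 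Differentiating once more and evaluating at $u_m^2=2z_{m,N}$ gives the diagonal entries $1+\nu/z_{m,N}+2\sum_{l\ne m}(z_{m,N}+z_{l,N})/(z_{m,N}-z_{l,N})^2$ and the off-diagonal entries $-8u_mu_n/(u_m^2-u_n^2)^2=-4\sqrt{z_{m,N}z_{n,N}}/(z_{m,N}-z_{n,N})^2$, i.e.\ exactly (\ref{covariance-matrix-B}). Making the limit rigorous requires the standard care of the Laplace method on a non-compact domain with logarithmic singularities: one uses local strict convexity of $E$ near $\mathbf r_N$ for the Gaussian contribution and tail estimates to discard the rest; alternatively one may invoke the tridiagonal $\beta$-Laguerre matrix model and the associated CLT in the freezing regime, as in the cited references.

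The spectral claim is the part I expect to be the main obstacle, because the entries of $S_N$ depend on the $z_{i,N}$ in a complicated way and no simple closed form for them is available. A first, clean observation is that $\mathbf r_N$ itself is an eigenvector: differentiating the Euler-type identity $u\cdot\nabla E(u)=\|u\|_2^2-2N(N-1)-2\nu N$ and using $\nabla E(\mathbf r_N)=0$ yields $\mathrm{Hess}\,E(\mathbf r_N)\,\mathbf r_N=2\,\mathbf r_N$, so that $(\sqrt{z_{1,N}},\dots,\sqrt{z_{N,N}})^T$ is an eigenvector of $S_N$ for the smallest eigenvalue $2$ (which one can also check directly from (\ref{covariance-matrix-B}) using the equilibrium relation above). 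As a consistency check, the trace $\sum_{j}2j=N(N+1)$ must match $\mathrm{tr}\,S_N$, which reduces to the classical Laguerre identities $\sum_m z_{m,N}^{-1}=N/\nu$ and $\sum_{m<l}(z_{m,N}+z_{l,N})/(z_{m,N}-z_{l,N})^2=N(N-1)/4$. For the full spectrum $2,4,\dots,2N$ I would use that $S_N$ is the matrix of small oscillations of the rational Calogero--Moser system of type $B_N$ about its equilibrium, whose frequencies are the degrees $2,4,\dots,2N$ of the basic invariants of the Weyl group of type $B_N$ (in contrast to the eigenvalues $1,2,\dots,N$ in the Hermite case of Theorem \ref{clt-main-a}). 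Concretely the remaining eigenvectors should arise, as for the scaling mode above, from the gradients of the higher invariant power sums $\sum_m u_m^{2k}$, reflecting the integrable (Lax) structure of the model; since only these eigenvalues enter the subsequent distance estimates, for the paper it suffices to quote their computation from the references.
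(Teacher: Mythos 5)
Your proposal is correct in substance, but it takes a genuinely different route from the paper: the paper gives no proof of Theorem \ref{clt-main-b} at all, only ``Remarks on the proof'' deferring the CLT to Dumitriu--Edelman \cite{DE2} and \cite{V}, the formula (\ref{covariance-matrix-B}) to \cite{V}, and the eigenvalues to \cite{AV}. Your Laplace-method derivation is essentially the mechanism behind the reproof in \cite{V}, and your computations check out: the rescaling $x=\sqrt\beta\,u$ does produce your $E(u)$, the stationarity equations under $u_m^2=2z_{m,N}$ reduce to $2z_{m,N}\sum_{j\ne m}(z_{m,N}-z_{j,N})^{-1}=z_{m,N}-\nu$, which is exactly the electrostatic identity $y''(z_m)/y'(z_m)=(z_m-\nu)/z_m$ for $L_N^{(\nu-1)}$, and the Hessian entries transform precisely into (\ref{covariance-matrix-B}) (e.g.\ $-8u_mu_n/(u_m^2-u_n^2)^2=-4\sqrt{z_{m,N}z_{n,N}}/(z_{m,N}-z_{n,N})^2$). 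The Euler-identity argument giving $S_N\,\mathbf r_N=2\,\mathbf r_N$ is a nice self-contained verification of the bottom eigenvalue, and your trace check is also correct ($\mathrm{tr}\,S_N=N+\nu\sum_m z_{m,N}^{-1}+4\sum_{m<l}(z_{m,N}+z_{l,N})/(z_{m,N}-z_{l,N})^2=N+N+N(N-1)=N(N+1)=\sum_j 2j$; note this is consistent with the theorem but shows that the right-hand side of (\ref{final-g-b1}) in Remark \ref{remark-sum} should read $1+3+\cdots+(2N-1)=N^2$ rather than $N(N-1)$). What your sketch buys is a self-contained and conceptually transparent derivation of the centering and of $S_N$; what it does not deliver is the full spectrum $2,4,\ldots,2N$: the Calogero--Moser/degrees-of-invariants heuristic and the ``gradients of power sums'' ansatz for the remaining eigenvectors are plausible but not carried out, so that part of your argument is, like the paper's, ultimately a citation to \cite{AV} (where the eigenvectors are in fact tied to the dual orthogonal polynomials of \cite{AHV}). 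Since only the eigenvalues enter the later estimates, quoting them is acceptable here, but you should state explicitly that this step is taken from the literature rather than proved.
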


\begin{proof}[Remarks on the proof] This CLT was  first  derived by  Dumitriu, Edelman  \cite{DE2}  via
  their tridiagonal random matrix models \cite{DE1}
  with explicit formulas for  $\Sigma_N$. The CLT was then reproved in a  different way in \cite{V} with 
  Eq.~(\ref{covariance-matrix-A}) for  $\Sigma_N^{-1}$ where  the $s_{i,j}$ there are expressed in terms of ${\bf r}_N$
  instead of  ${\bf z}_N$ by using
  \begin{align}\label{umrechnen-b}
 \frac{ z_{i,N}+ z_{l,N}}{ (z_{i,N}- z_{l,N})^2} &=  (r_i-r_l)^{-2}+ (r_i+r_l)^{-2} ,\notag\\
 -2\frac{\sqrt{z_{i,N}z_{j,N}} }{ (z_{i,N}- z_{j,N})^2} &= (r_i+r_j)^{-2}  -(r_i-r_j)^{-2}.
  \end{align}
 The eigenvalues  were determined in \cite{AV}, and
  in \cite{AHV} the duality of finite  orthogonal polynomials
  was used 
  to compute the entries of  $\Sigma_N$  from   (\ref{covariance-matrix-A}).
  Again, the equations for the entries of  $\Sigma_N$ in \cite{AHV} look different from those in  \cite{DE2}.
\end{proof}

We now use Theorem \ref{clt-main-b} as in Section 2 to derive estimates. This approach leads
to  preliminary results.

\begin{lemma}\label{distance-main-lemma-b}
  Let $\nu>0$. For all $N\ge1$, the ordered roots $z_{1,N}>\ldots > z_{N,N}>0$ of  $L_N^{(\nu-1)}$ satisfy
\begin{equation}\label{est-b01}\Bigl(\frac{\nu}{z_{i,N}}+
  2\sum_{l; l\ne i}\frac{ z_{i,N}+ z_{l,N}}{ (z_{i,N}- z_{l,N})^2}\Bigr)^2+ 16 \sum_{l; l\ne i}\frac{z_{i,N}z_{l,N} }{ (z_{i,N}- z_{l,N})^4}
  \le (2N-1)^2.
 \end{equation}
  In particular
 \begin{equation}\label{est-b02} z_{N,N}\ge \frac{\nu}{2N-1}.
 \end{equation}
\end{lemma}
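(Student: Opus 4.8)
The plan is to mimic the proof of Proposition \ref{distance-main-a} almost verbatim, with two adjustments forced by the Laguerre data in Theorem \ref{clt-main-b}: the eigenvalues of $S_N$ are now the even integers $2,4,\ldots,2N$ rather than $1,\ldots,N$, and, as I explain below, the all-ones vector is no longer a null eigenvector, so only the weaker trace bound (\ref{inequality-trace1}) of Lemma \ref{est-trace} is at my disposal. First I would set $B:=S_N-I_N$, where $S_N$ is the matrix of Theorem \ref{clt-main-b}. By that theorem $B$ is symmetric with eigenvalues $1,3,5,\ldots,2N-1$. The key algebraic observation is that the left-hand side of (\ref{est-b01}) is exactly the $i$-th diagonal entry of $B^2$: the diagonal entry $b_{i,i}=s_{i,i}-1=\frac{\nu}{z_{i,N}}+2\sum_{l\ne i}\frac{z_{i,N}+z_{l,N}}{(z_{i,N}-z_{l,N})^2}$ contributes the squared first term, while by (\ref{covariance-matrix-B}) the off-diagonal entries satisfy $b_{i,j}^2=s_{i,j}^2=16\,\frac{z_{i,N}z_{j,N}}{(z_{i,N}-z_{j,N})^4}$, so that $\sum_{j\ne i}b_{i,j}^2$ is precisely the second term. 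Hence $(B^2)_{i,i}=b_{i,i}^2+\sum_{j\ne i}b_{i,j}^2$ equals the whole left-hand side of (\ref{est-b01}).

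Next I would apply the basic trace inequality (\ref{inequality-trace1}) to the symmetric matrix $C:=B^2$, whose diagonal entries $c_{i,i}=(B^2)_{i,i}$ are the quantities just identified. This gives $\mathrm{tr}(C^{2^r})\ge\sum_{i=1}^N c_{i,i}^{2^r}$ for every integer $r\ge 0$. On the other hand, since the eigenvalues of $B$ are $1,3,\ldots,2N-1$, I have $\mathrm{tr}(C^{2^r})=\mathrm{tr}(B^{2^{r+1}})=\sum_{m=1}^N(2m-1)^{2^{r+1}}\le N\,(2N-1)^{2^{r+1}}=N\bigl((2N-1)^2\bigr)^{2^r}$, the largest eigenvalue $2N-1$ governing the bound. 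Combining the two estimates yields, for each fixed $i$, the inequality $c_{i,i}^{2^r}\le N\bigl((2N-1)^2\bigr)^{2^r}$, that is $\bigl(c_{i,i}/(2N-1)^2\bigr)^{2^r}\le N$ for all $r$. By the same elementary principle used in Proposition \ref{distance-main-a}, namely that $x^{2^r}$ bounded as $r\to\infty$ forces $x\le 1$, I conclude $c_{i,i}\le(2N-1)^2$, which is exactly (\ref{est-b01}). The second inequality (\ref{est-b02}) is then immediate: every summand on the left of (\ref{est-b01}) is nonnegative because the roots are positive, so in particular $\bigl(\nu/z_{i,N}\bigr)^2\le(2N-1)^2$, giving $\nu/z_{i,N}\le 2N-1$ and hence $z_{i,N}\ge\nu/(2N-1)$ for every $i$; specializing to the smallest root $z_{N,N}$ gives the stated bound.

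The main point of difference from the Hermite argument, and the one I would flag explicitly, is that $B=S_N-I_N$ now has eigenvalues $1,3,\ldots,2N-1$, none of which is zero, so the all-ones vector cannot be a null eigenvector and the sharper bound (\ref{inequality-trace2}) is simply unavailable; the argument must go through with (\ref{inequality-trace1}) alone. This is harmless: the weaker inequality merely introduces a factor $N^{1/2^r}$ which tends to $1$, so the final bound is unaffected and only the base $(2N-1)^2$, coming from the top eigenvalue of $B$, survives the limit. The one place requiring genuine care is the exponent bookkeeping, namely applying (\ref{inequality-trace1}) to $C=B^2$ and using $\mathrm{tr}(C^{2^r})=\mathrm{tr}(B^{2^{r+1}})$ to feed in the eigenvalues of $B$; apart from that the computation is routine.
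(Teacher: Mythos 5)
Your proof is correct and follows essentially the same route as the paper: decompose $S_N=I_N+(S_N-I_N)$, identify the left-hand side of (\ref{est-b01}) as the diagonal of $(S_N-I_N)^2$, bound $\mathrm{tr}\bigl((S_N-I_N)^{2^{r+1}}\bigr)$ by $N(2N-1)^{2^{r+1}}$ via the eigenvalues $1,3,\ldots,2N-1$, and let $r\to\infty$ using (\ref{inequality-trace1}). Your explicit justification of (\ref{est-b02}) and your remark that (\ref{inequality-trace2}) is unavailable here are both accurate and consistent with what the paper does implicitly.
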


\begin{proof} We proceed as in the proof of Theorem \ref{distance-main-a} and write 
 $S_N$  as $S_N=I_N+A_N$ 
  where, by  Theorem \ref{clt-main-b}, $A_N$ has the eigenvalues $1,3,5,\ldots, 2N-1$. Therefore, for all integers  $r\ge0$,
  \begin{equation}\label{est-b1}
    tr(A_N^{2^r})= 1^{2^r}+3^{2^r}+5^{2^r}+\ldots+(2N-1)^{2^r}\le N(2N-1)^{2^r}.
    \end{equation}
  On the other hand, if we write $A^2=:(a_{i,j}^{(2)})_{i,j=1,\ldots,N}$, we obtain from (\ref{covariance-matrix-B}) that
  $$a_{i,i}^{(2)}=\Bigl(\frac{\nu}{z_{i,N}}+
    2\sum_{l; l\ne i}\frac{ z_{i,N}+ z_{l,N}}{ (z_{i,N}- z_{l,N})^2}\Bigr)^2+ 16 \sum_{l; l\ne i}\frac{z_{i,N}z_{l,N} }{ (z_{i,N}- z_{l,N})^4}$$
    for $i=1,\ldots,N$.
    Moreover, (\ref{est-b1}) and  Lemma \ref{est-trace} for $B:=A_N^2$ lead to
\begin{equation}\label{est-b2}
 N((2N-1)^2)^{2^{r-1}} \ge tr(A_N^{2^r})\ge\sum_{i=1}^N(a_{i,i}^{(2)})^{2^{r-1}} \end{equation}
for all  integers  $r\ge1$, which implies that $a_{i,i}^{(2)}\le (2N-1)^2$ for all $i$. This proves (\ref{est-b01}) and also
(\ref{est-b02}).
\end{proof}

\begin{remark}\label{remark-sum}
Similar to the identities  (\ref{final-g-a1}) and  (\ref{final-g-a2}) in the Hermite case, we have in the Laguerre case
 \begin{equation}\label{final-g-b1}
 \nu\sum_{i=1}^N\frac{1}{z_{i,N}}+2 \sum_{i,l: \> i\ne l} \frac{z_{i,N}+z_{l,N}}{(z_{i,N}-z_{l,N})^{2}} =2+4+\ldots 2(N-1)=N(N-1)\end{equation}
   and
   \begin{align}\label{final-g-b2}
     \sum_{i=1}^{N}&
\Bigl(\Bigl(\frac{\nu}{z_{i,N}}+
    2\sum_{l; l\ne i}\frac{ z_{i,N}+ z_{l,N}}{ (z_{i,N}- z_{l,N})^2}\Bigr)^2+ 16 \sum_{l; l\ne i}\frac{z_{i,N}z_{l,N} }{ (z_{i,N}- z_{l,N})^4}\Bigr)
 \notag\\
     &=
    1^2+3^2+\ldots+(2N-1)^2 =
    \frac{N(2N-1)(2N+1)}{3}.
   \end{align}
   In fact,  both sides of (\ref{final-g-b1}) and (\ref{final-g-b2}) are equal to  $tr(S_N-I_N)$ and
   $tr((S_N-I_N)^2)$ respectively. (\ref{final-g-b2}) in particular shows that (\ref{est-b01}) is sharp up to a factor 3
   in the worst case.
  \end{remark}

\begin{remark}\label{remark-lagu} We briefly discuss  (\ref{est-b02}), which is a byproduct of   (\ref{est-b01}):
 \begin{enumerate}
\item[\rm{(1)}]  The order $O(N^{-1})$ in (\ref{est-b02}) is sharp by classical upper
  bounds for $z_{N,N}$ in \cite{H} or Eq.~(6.31.12) in \cite{S}.
\item[\rm{(2)}]  For $N=1$, we have equality in (\ref{est-b02}) by (5.1.6) in \cite{S}.
\item[\rm{(3)}] By  Eq.~(6.31.12) in \cite{S}, $z_{N,N}$ can be estimated in terms of the first zero of the Bessel function $J_{\nu-1}$.
  If one combines this estimate with Eq.~(5) in Section 15.3 of Watson \cite{W} on these  zeroes of $J_{\nu-1}$,
  one obtains 
\begin{equation}\label{est-b02-szegoe} z_{N,N}\ge \frac{\nu^2-1}{4(N+\nu/2)}.
 \end{equation}
For large $\nu$, (\ref{est-b02-szegoe}) is clearly better than  (\ref{est-b02}). However, 
for small $\nu>0$, (\ref{est-b02-szegoe}) is worse than (\ref{est-b02}).
The same holds for similar bounds in Theorem 1 of \cite{K3}.
\item[\rm{(4)}]  We finally notice that 18.16.12 in \cite{NIST} implies for $\nu\to\infty$ and $\nu>>N$ that
  $z_{N,N}\ge \nu +o(\nu)$.
\end{enumerate} \end{remark}

We next use Lemma \ref{distance-main-lemma-b} to derive estimates for  the distances  of  roots.

 \begin{proposition}\label{distance-main-theorem-b}
  Let $\nu>0$. For all $N\ge2$, the ordered roots $z_{1,N}>\ldots > z_{N,N}>0$ of  $L_N^{(\nu-1)}$ satisfy
  \begin{equation}\label{est-b03}
    z_{i,N}- z_{i+1,N}\ge\frac{\sqrt{2(1+\sqrt{1+8\nu^2})}}{2N-1}\ge \frac{2\cdot 2^{1/4}\sqrt \nu}{2N-1}
  \end{equation}
  and
  \begin{align}\label{est-b03a}  z_{i,N}- z_{i+1,N}\ge&
    \frac{\sqrt{2}}{2N-1}\sqrt{2+\sqrt{2}\cdot\sqrt{2+\frac{(2N-1)^2(\nu^2-1)^2}{(N+\nu/2)^2}}}\notag\\
    \ge&
 \frac{2^{3/4}\cdot \sqrt{\nu^2-1}}{\sqrt{(2N-1)(N+\nu/2)}}
  \end{align}
for $i=1,\dots,N-1$.
\end{proposition}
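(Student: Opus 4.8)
The plan is to squeeze the estimate out of a single off-diagonal pair of \emph{consecutive} roots in inequality (\ref{est-b01}), in exact analogy with the way the Hermite corollary is extracted from (\ref{final-ung-a2}). First I would apply Lemma \ref{distance-main-lemma-b} at the fixed index $i$ and keep only the term $l=i+1$ in each of the two sums. Since every discarded summand is nonnegative, this yields the valid inequality
\[
\Bigl(\frac{\nu}{z_{i,N}}+\frac{2(z_{i,N}+z_{i+1,N})}{(z_{i,N}-z_{i+1,N})^2}\Bigr)^2+\frac{16\,z_{i,N}z_{i+1,N}}{(z_{i,N}-z_{i+1,N})^4}\le (2N-1)^2 .
\]
Writing $d:=z_{i,N}-z_{i+1,N}$, I would then drop the nonnegative term $\nu/z_{i,N}$ inside the square and apply the elementary identity $4(z_{i,N}+z_{i+1,N})^2+16\,z_{i,N}z_{i+1,N}=4d^2+32\,z_{i,N}z_{i+1,N}$ to collapse the left-hand side down to
\[
\frac{4}{d^2}+\frac{32\,z_{i,N}z_{i+1,N}}{d^4}\le (2N-1)^2 .
\]
This is the crucial reduction: it involves only $d$ and the product $z_{i,N}z_{i+1,N}$, and it is where the content of Lemma \ref{distance-main-lemma-b} is spent.

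Next I would bound the product from below by the smallest root, using $z_{i,N}z_{i+1,N}\ge z_{N,N}^2$, and then insert a lower bound for $z_{N,N}$. For the first estimate (\ref{est-b03}) the natural choice is the self-contained bound (\ref{est-b02}), that is $z_{N,N}\ge \nu/(2N-1)$, which gives $z_{i,N}z_{i+1,N}\ge \nu^2/(2N-1)^2$. Substituting this and clearing the powers of $d$ turns the displayed inequality into a quadratic inequality in the single variable $u:=(2N-1)^2d^2$, namely $u^2-4u-32\nu^2\ge 0$. Solving for the positive root yields $u\ge 2\bigl(1+\sqrt{1+8\nu^2}\bigr)$, which is exactly (\ref{est-b03}); the coarser bound $2\cdot 2^{1/4}\sqrt{\nu}/(2N-1)$ then drops out by discarding the $1$ and using $\sqrt{1+8\nu^2}\ge 2\sqrt2\,\nu$.

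For the second estimate (\ref{est-b03a}) the mechanism is identical, and the only change is the input bound for the smallest root: I would feed in the sharper, Bessel-type estimate (\ref{est-b02-szegoe}), $z_{N,N}\ge (\nu^2-1)/(4(N+\nu/2))$, in place of (\ref{est-b02}). This produces a quadratic of the same shape, with $8\nu^2$ replaced by the corresponding expression in $N$ and $\nu$, and solving it gives the nested radical appearing in (\ref{est-b03a}); the simplified bound again comes from retaining only the dominant term under the square roots.

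The routine but genuinely delicate part — and the step I would watch most carefully — is the final bookkeeping: checking the collapsing identity, tracking the numerical constants through the substitution of the product bound and the clearing of $d^2$ and $d^4$, and then matching the solution of the quadratic to the precise nested-radical form (in particular getting the powers of $2$ right). All the conceptual work lies in the one-term reduction and in the choice of lower bound for $z_{N,N}$; everything downstream is solving a quadratic in $d^2$.
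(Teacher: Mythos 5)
Your proposal reproduces the paper's proof essentially verbatim: the paper likewise keeps only the $l=i+1$ terms in (\ref{est-b01}), drops $\nu/z_{i,N}$, uses $(z_{i,N}+z_{i+1,N})^2=(z_{i,N}-z_{i+1,N})^2+4z_{i,N}z_{i+1,N}$ to reach $(2N-1)^2w^2\ge 4w+32z_{i,N}z_{i+1,N}$ with $w=d^2$, bounds the product by $z_{N,N}^2$ via (\ref{est-b02}) resp.\ (\ref{est-b02-szegoe}), and solves the quadratic, so your derivation of (\ref{est-b03}) is correct and identical in route. The only point deserving the care you yourself flag is the constant in (\ref{est-b03a}): with $c=2(\nu^2-1)^2/(N+\nu/2)^2$ the quadratic gives $d\ge\frac{1}{2N-1}\bigl(2+\sqrt{2}\sqrt{2+T}\bigr)^{1/2}$ with $T=(2N-1)^2(\nu^2-1)^2/(N+\nu/2)^2$, i.e.\ \emph{without} the leading factor $\sqrt{2}$ printed in (\ref{est-b03a}) (and correspondingly $2^{1/4}$ rather than $2^{3/4}$ in the coarse form) --- this discrepancy lies in the paper's stated constant, not in your method.
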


\begin{proof} 
  (\ref{est-b01}) and (\ref{est-b02}) imply
  \begin{align}\label{main-est-lagu-distances}
    (2N-1)^2\ge& 4\frac{ (z_{i,N}+ z_{i+1,N})^2}{ (z_{i,N}- z_{i+1,N})^4} + 16\frac{z_{i,N}z_{i+1,N} }{ (z_{i,N}- z_{i+1,N})^4} \\
  =& 4\frac{(z_{i,N}-z_{i+1,N})^2+8z_{i,N}z_{i+1,N} }{ (z_{i,N}- z_{i+1,N})^4} \ge
  4\frac{(z_{i,N}-z_{i+1,N})^2+\frac{8\nu^2}{(2N-1)^2} }{ (z_{i,N}- z_{i+1,N})^4}\notag
\end{align}
On the other hand, elementary calculus shows that for $w:= (z_{i,N}- z_{i+1,N})^2>0$ the inequality
$(2N-1)^2 w^2\ge 4w+\frac{32\nu^2}{(2N-1)^2}$ implies that
$$w\ge \frac{2(1+\sqrt{1+8\nu^2})}{(2N-1)^2}$$
which completes the proof of the first $\ge$ in (\ref{est-b03}) while the second one is trivial.
The proof of (\ref{est-b03a}) is analog by using (\ref{est-b02-szegoe}) in the last $\ge$ of (\ref{main-est-lagu-distances})
instead of 
 (\ref{est-b02}).
\end{proof}

\begin{remark}\label{rem-lagu2}
 \begin{enumerate}
\item[\rm{(1)}]  The order $O(N^{-1})$ in (\ref{est-b03}) and  (\ref{est-b03a}) is sharp for fixed $\nu$.
\item[\rm{(2)}] Clearly,  (\ref{est-b03a})  is better than  (\ref{est-b03}) for large $\nu$, while  (\ref{est-b03}) is the
  better for small $\nu$.
\item[\rm{(3)}]  We briefly compare   (\ref{est-b03a})
  with other  bounds in the literature which are of particular interest for $\nu>>N$.
  Theorem 3.1 of \cite{CD} implies
  \begin{equation}\label{lagu-alter1} z_{i,N}- z_{i+1,N}\ge \frac{\nu-1}{\sqrt{(N+\nu-1)N}},
    \end{equation}
  and results in \cite{K2} lead to
\begin{equation}\label{lagu-alter2}  z_{i,N}- z_{i+1,N}\ge \frac{2\sqrt{2}\cdot\nu}{\sqrt{(N+\nu)N}}.
    \end{equation}
(\ref{est-b03a}),   (\ref{lagu-alter1}), and  (\ref{lagu-alter2}) have the same order $\sqrt \nu/\sqrt N$ for $\nu>>N>>1$,
where the asymptotic constant in (\ref{lagu-alter2}) is the best one.
\item[\rm{(4)}] On the other hand, 
    Theorem 3.1 of \cite{JT} contains the bound
    $$ z_{i,N}- z_{i+1,N}\ge \frac{\pi\sqrt 2}{\sqrt{2\nu N +\nu+2N^2}}$$
    which is better than (\ref{est-b03}) and  (\ref{est-b03a}) for small $\nu$, but worse for large $\nu$.
     \item[\rm{(5)}] Notice that our  estimates for $ z_{i,N}- z_{i+1,N}$ depend heavily
       on good estimates for  $ z_{i,N},z_{i+1,N}$.
       In this way, one may use Theorem 6.31.3 in \cite{S} (see also 18.16.10 in \cite{NIST}) in combination with 
       estimates on the zeroes of Bessel functions in Section 15.3 in \cite{W}
       to derive better bounds from  (\ref{est-b01}) than in  (\ref{est-b03}) which then depend on $i$.
  \end{enumerate}
\end{remark}

The dependence of  Proposition \ref{distance-main-theorem-b} on good estimates for   $ z_{i,N},z_{i+1,N}$
is the motivation to compare the distances of consecutive roots   on a different scale by studying
the vectors  ${\bf r}_N$ above. In this way,  Theorem \ref{clt-main-b}
leads to the following estimate which is independent from $\nu$:

\begin{theorem}\label{distance-main-theorem-b-wurzel}
  Let $\nu>0$. For all $N\ge2$, the ordered roots $z_{1,N}>\ldots > z_{N,N}>0$ of  $L_N^{(\nu-1)}$ satisfy
  \begin{equation}\label{est-b04}  \sqrt{z_{i,N}}- \sqrt{ z_{i+1,N}}\ge\frac{1}{\sqrt{2N-1}}
 \quad (i=1,\dots,N-1).
 \end{equation}
\end{theorem}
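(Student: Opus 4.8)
The plan is to run the same machinery as in Lemma~\ref{distance-main-lemma-b}, but to read off the relevant diagonal entry of $A_N$ in the variables $r_1>\dots>r_N>0$ determined by $r_i^2=2z_{i,N}$, rather than in the $z_{i,N}$. The point is that the crucial diagonal bound $a_{i,i}^{(2)}\le(2N-1)^2$ already established in the proof of Lemma~\ref{distance-main-lemma-b} is all the input we need; what remains is to extract from it an estimate on $r_i-r_{i+1}$, which via $\sqrt{z_{i,N}}=r_i/\sqrt2$ is precisely the claimed bound on $\sqrt{z_{i,N}}-\sqrt{z_{i+1,N}}$.

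First I would write $S_N=I_N+A_N$ as before and recall from the proof of Lemma~\ref{distance-main-lemma-b} that $a_{i,i}^{(2)}\le(2N-1)^2$ for every $i$. Since $A_N$ is symmetric, $a_{i,i}^{(2)}=\sum_{j}a_{i,j}^2\ge a_{i,i}^2$ (this is just the $r=1$ case of the elementary step in Lemma~\ref{est-trace}), so in particular $a_{i,i}^2\le(2N-1)^2$.

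Next I would rewrite the diagonal entry $a_{i,i}=s_{i,i}-1$ of $A_N$ using the substitution (\ref{umrechnen-b}) together with $\nu/z_{i,N}=2\nu/r_i^2$, which gives
\[
a_{i,i}=\frac{2\nu}{r_i^2}+2\sum_{l;\,l\ne i}\bigl[(r_i-r_l)^{-2}+(r_i+r_l)^{-2}\bigr].
\]
The decisive feature is that in these variables \emph{every} summand is positive; hence for $i<N$ I may keep only the single term coming from $l=i+1$ and discard the rest, obtaining $a_{i,i}\ge 2(r_i-r_{i+1})^{-2}$. Combining this with $a_{i,i}^2\le(2N-1)^2$ yields $4(r_i-r_{i+1})^{-4}\le(2N-1)^2$, that is $(r_i-r_{i+1})^2\ge 2/(2N-1)$, and translating back through $\sqrt{z_{i,N}}-\sqrt{z_{i+1,N}}=(r_i-r_{i+1})/\sqrt2$ gives exactly (\ref{est-b04}).

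I do not expect a serious computational obstacle; the only delicate point is the reparametrization. In the $z$-variables the off-diagonal entries of $A_N$ carry mixed signs and the diagonal $a_{i,i}$ is not transparently a sum of positive terms, so isolating one gap $(r_i-r_{i+1})^{-2}$ with the right sign is awkward and the resulting bound would inevitably also involve the $r_i+r_{i+1}$ (i.e.\ the sizes of the roots themselves), as happens in Proposition~\ref{distance-main-theorem-b}. Passing to the $r$-variables via (\ref{umrechnen-b}) is exactly what turns $a_{i,i}$ into a manifestly positive sum and legitimizes dropping all but one term, which is what makes the estimate clean and, notably, independent of $\nu$.
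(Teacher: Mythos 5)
Your proposal is correct and follows essentially the same route as the paper: both reduce to the bound $a_{i,i}\le 2N-1$ for the diagonal of $A_N=S_N-I_N$, rewrite that diagonal via (\ref{umrechnen-b}) as a sum of positive terms in the $r_i$, and drop all but the single term $2(r_i-r_{i+1})^{-2}$. The only (immaterial) difference is that you extract $a_{i,i}\le 2N-1$ from the already-established $a_{i,i}^{(2)}\le(2N-1)^2$ via $a_{i,i}^2\le a_{i,i}^{(2)}$, whereas the paper applies Lemma \ref{est-trace} directly to $B=S_N-I_N$.
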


\begin{proof} We use the numbers $r_i=\sqrt{2z_{i,N}}$ from the beginning of this section:
  By (\ref{umrechnen-b}), the matrix $S_N=(s_{i,j})_{i,j=1,\ldots,N}$ from Theorem \ref{clt-main-b} then satisfies
  $$s_{i,i}=1+\frac{2\nu}{r_i^2}+2\sum_{l: l\ne i}\Bigl( \frac{1}{(r_i-r_l)^2}+ \frac{1}{(r_i+r_l)^2}\Bigr)$$
  for all $i$; see also \cite{V}. If we apply the first statement of Lemma \ref{est-trace} to $B:=S_N-I_N$ with
  the eigenvalues $1,3,\ldots,2N-1$,we obtain from the methods of the proof of Theorem \ref{distance-main-a} that for all $i\ne l$,
  $$\frac{2}{(r_i-r_l)^2}\le 2N-1$$
  and thus the claim.
\end{proof}
 
\begin{remark} The Hermite and Laguerre polynomials  are related by
  $$H_{2N}(x)= c_N L_N^{(-1/2)}(x^2), \quad H_{2N+1}(x)= d_N x\cdot L_N^{(1/2)}(x^2)$$
  with some constants $c_N,d_N$ (see (5.6.1) of \cite{S}). If we compare Theorem \ref{distance-main-theorem-b-wurzel}
  (for $\nu=1/2,3/2$) with  (\ref{final-ung-a4}), we see that  (\ref{final-ung-a4}) is better by the factor $2^{1/4}$.
We are not able to explain this discrepancy precisely, but there should exist some connection, as
   the covariance matrices
  (written in terms of the $r_i$ as in \cite{V} in the Laguerre case)
 and their eigenvalues
  in these two cases admit
  obvious similarities.
\end{remark}

\section{The Jacobi case}

 We first recapitulate some facts on $\beta$-Jacobi ensembles; see e.g. \cite{F,  KN, Me, HV}.
 It  turns out that  it is convenient here to study these ensembles in a trigonometric form
 as in \cite{HV}.
 For this we fix $k=(k_1,k_2,k_3)\in[0,\infty[^3$ and consider  a  random vector $X_{k,N}$
 on the trigonometric alcove
$$ A_N:=\{t\in\mathbb R^N|\frac{\pi}{2}\geq t_1\geq...\geq t_N\geq 0 \}$$
with the Lebesgue density
\begin{equation}\label{density-joint-trig}
\tilde c_k\cdot \prod_{1\leq i< j \leq N}\left(\cos(2t_j)-\cos(2t_i)\right)^{k_3}
\prod_{i=1}^N\Bigl(\sin^{k_1}t_i\sin^{k_2}(2t_i)\Bigr)
\end{equation}
with a suitable Selberg normalization $\tilde c_k>0$ for parameters
$k=(k_1,k_2,k_3)\in[0,\infty[^3$; see  \cite{FW} for explicit formulas for $\tilde c_k>0$.
Following \cite{HV}, we  write
$$(k_1,k_2,k_3)=\kappa\cdot(a,b,1),$$
where $a\ge0$, $b>0$ are fixed, and where $\kappa$ tends to infinity. By \cite{HV}, the limit of the $X_{k,N}$ for
$\kappa\to\infty$ 
 can be described via the ordered zeros
of the  Jacobi polynomials 
$P_N^{(\alpha,\beta)}$ with 
\begin{equation}\label{jac-par}
  \alpha:=a+b-1>-1, \quad\beta=b-1>-1.\end{equation}
Please notice that the $\beta$ in (\ref{jac-par})
is  different from the  $\beta$ in  the preceding sections, and that the $\beta$ there is now called $\kappa$.
    We denote the ordered zeros of   $P_N^{(\alpha,\beta)}$ 
   by  $-1<z_{1,N}\le \ldots\le z_{N,N}<1$ and form the vectors 
   $$ {\bf z}_N:=(z_{1,N}, \ldots, z_{N,N})\quad\text{ and}\quad
 {\bf \tilde z}_N:= (\frac{1}{2}\arccos z_{1,N}, \ldots,\frac{1}{2}\arccos z_{N,N}). $$
The following CLT is shown in \cite{HV}:

\begin{theorem}\label{theoremCLT-transformed}
Let $X_{k,N}$ be $ A_N$-valued  random variables as described above.
 Then, for $\kappa\rightarrow\infty$,
 $$\sqrt{\kappa}( X_{N,k}-  {\bf\tilde z}_N ) $$
converges in distribution to  $N(0,\Sigma_N)$ where the  inverse 
of the  covariance matrix $\Sigma_N$ is given by
 $\Sigma_N^{-1}=: S_N=( s_{i,j})_{i,j=1,...,N}$ with
\begin{equation}\label{entry-jac}
 s_{i,j}=
\begin{cases}4\sum_{ l\ne j}
\frac{1-z_{j,N}^2}{(z_{j,N}-z_{l,N})^2}+2(a+b)\frac{1+z_{j,N}}{1-z_{j,N}}+2b\frac{1-z_{j,N}}{1+z_{j,N}}
&\textit{ for }i=j\\
\frac{-4\sqrt{(1-z_{j,N}^2)(1-z_{i,N}^2)}}{(z_{i,N}-z_{j,N})^2}&\textit{ for }i\neq j
\end{cases}.
\end{equation}
The matrix $ S_N$  has the  eigenvalues
$\lambda_j=2j(2N+\alpha+\beta+1-j)>0$ ($j=1,\ldots,N$).
\end{theorem}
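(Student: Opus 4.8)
The plan is to take both the central limit theorem and the formula (\ref{entry-jac}) for $S_N=\Sigma_N^{-1}$ as established in \cite{HV} via the freezing limit $\kappa\to\infty$ of the $\beta$-Jacobi ensemble in its trigonometric form (\ref{density-joint-trig}); I would not reprove that part. The genuinely algebraic content that remains is the identification of the spectrum $\lambda_j=2j(2N+\alpha+\beta+1-j)$ of $S_N$, and this is where I would concentrate the work. The guiding observation is that, writing $\mu_n:=n(n+\alpha+\beta+1)$ for the $n$-th eigenvalue of the Jacobi differential operator, one has the identity $\lambda_j=2(\mu_N-\mu_{N-j})$. This strongly suggests that the eigenvalues of $S_N$ arise as relaxation rates of the Jacobi operator restricted to polynomial perturbations of the equilibrium configuration $ {\bf z}_N$.

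First I would simplify the off-diagonal entries by a diagonal conjugation. Setting $D:=\mathrm{diag}\bigl(\sqrt{1-z_{i,N}^2}\bigr)$ and $\tilde S_N:=D^{-1}S_N D$, the factor $\sqrt{(1-z_{i,N}^2)(1-z_{j,N}^2)}$ in (\ref{entry-jac}) collapses, and a short computation shows that for any polynomial $f$ the vector $v=(f(z_{1,N}),\dots,f(z_{N,N}))^T$ satisfies
\begin{equation*}
  (\tilde S_N v)_i=4\sum_{j\ne i}\frac{h(z_{i,N})-h(z_{j,N})}{(z_{i,N}-z_{j,N})^2}
  +h(z_{i,N})\Bigl(\frac{2(\alpha+1)}{(1-z_{i,N})^2}+\frac{2(\beta+1)}{(1+z_{i,N})^2}\Bigr),
\end{equation*}
where $h(x):=(1-x^2)f(x)$ and I use $a+b=\alpha+1$, $b=\beta+1$ from (\ref{jac-par}). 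The first sum is a divided-difference (discrete second-derivative) operator, so the whole expression is exactly the restriction to the nodes $z_{i,N}$ of the Jacobi operator applied to $h$, corrected by the endpoint contributions.

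Next I would exploit the two structural relations satisfied by the zeros of $P_N:=P_N^{(\alpha,\beta)}$: the Jacobi differential equation $(1-x^2)P_N''+(\beta-\alpha-(\alpha+\beta+2)x)P_N'+\mu_N P_N=0$, which at a zero $z_{i,N}$ yields the Stieltjes relation $\sum_{l\ne i}(z_{i,N}-z_{l,N})^{-1}=P_N''(z_{i,N})/(2P_N'(z_{i,N}))=\tfrac12\bigl((\alpha+\beta+2)z_{i,N}+\alpha-\beta\bigr)/(1-z_{i,N}^2)$, together with its derivative, which controls $\sum_{l\ne i}(z_{i,N}-z_{l,N})^{-2}$ through $P_N',P_N'',P_N'''$ at the nodes. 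Using these, the divided-difference sums expand in the monomial values $z_{i,N}^m$, and one checks that $\tilde S_N$ preserves the flag $V_0\subset V_1\subset\cdots\subset V_{N-1}$, where $V_m$ is spanned by the evaluation vectors $(p(z_{1,N}),\dots,p(z_{N,N}))^T$ with $\deg p\le m$ (adapted by the endpoint weight factors). Reading off the action on the top degree of each step produces precisely the diagonal entries $\lambda_1,\dots,\lambda_N$; since these values are pairwise distinct, the triangular action pins down the full spectrum.

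The main obstacle is the pair of boundary terms $2(\alpha+1)(1-z_{i,N})^{-2}$ and $2(\beta+1)(1+z_{i,N})^{-2}$: they are not polynomial in $z_{i,N}$, so the naive flag of monomial-evaluation vectors is not literally invariant, and one must either pass to the trigonometric variables $\tfrac12\arccos z_{i,N}$ of (\ref{density-joint-trig}) — in which the operator becomes the Sturm--Liouville form of the Jacobi operator and the endpoint singularities are absorbed into the natural boundary behaviour — or work from the outset with a basis incorporating the factors $(1-z_{i,N})$ and $(1+z_{i,N})$. Carrying out this reduction and matching the resulting diagonal coefficients against $2j(2N+\alpha+\beta+1-j)=2(\mu_N-\mu_{N-j})$ is the crux of the argument; the Hermite and Laguerre analogues in Theorems \ref{clt-main-a} and \ref{clt-main-b}, with spectra $1,\dots,N$ and $2,\dots,2N$, are the degenerate cases of this computation and serve as a useful consistency check. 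I note that the eigenvalues are also recorded in \cite{HV} (with related computations in \cite{AHV, AV}), so this sketch is intended as an independent derivation rather than a novel result.
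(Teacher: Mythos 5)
The paper offers no proof of this theorem at all: it is introduced with ``The following CLT is shown in \cite{HV}'', and the formula for $\Sigma_N^{-1}$ and the eigenvalues $\lambda_j=2j(2N+\alpha+\beta+1-j)$ are both taken from that reference. Since your proposal likewise defers the CLT, the formula (\ref{entry-jac}), and (in your closing sentence) the spectrum to \cite{HV}, it matches the paper's treatment; on that level nothing is missing.

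The additional independent derivation of the spectrum that you sketch is, however, a program rather than a proof, and you should not present it as more than that. The correct parts: the conjugation by $D=\mathrm{diag}(\sqrt{1-z_{i,N}^2})$, the resulting action on vectors $h(z_{i,N})$ with $h=(1-x^2)f$, and the identity $\lambda_j=2(\mu_N-\mu_{N-j})$ with $\mu_n=n(n+\alpha+\beta+1)$ all check out. The decisive step --- that the discrete operator preserves a flag of evaluation subspaces and acts triangularly with diagonal entries $\lambda_1,\dots,\lambda_N$ --- is exactly what is \emph{not} established. The natural candidate flag $W_m=\{((1-z_{i,N}^2)f(z_{i,N}))_i:\deg f\le m\}$ is not obviously invariant, precisely because the boundary contributions $2(\alpha+1)h(z_{i,N})/(1-z_{i,N})^2=2(\alpha+1)(1+z_{i,N})f(z_{i,N})/(1-z_{i,N})$ and its mirror are not of the form $(1-x^2)g(x)$ with $\deg g\le m$; you identify this obstacle yourself but resolve it only by gesturing at a trigonometric substitution or a modified basis. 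Until one exhibits an explicit invariant flag (or explicit eigenvectors, e.g.\ built from derivatives of the Jacobi polynomials as in the de Boor--Saff duality used in \cite{AHV}) and verifies the diagonal coefficients, the spectrum is not derived. So either carry that computation through in full, or simply cite \cite{HV} (and \cite{AV, AHV} for the analogous Hermite and Laguerre statements), as the paper does.
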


In order to apply the methods of Sections 2 and 3 in the Jacobi case, we need the maximum
of the eigenvalues $\lambda_j$, i.e the spectral radius. For this  define 
\begin{equation} M:= M(\alpha,\beta,N):= \max_{j=1,\ldots,N}2j(2N+\alpha+\beta+1-j).
\end{equation}
Elementary calculus yields the following facts:

\begin{lemma} For all $\alpha,\beta>-1$,
\begin{equation}\label{max-ev1} M\le 2\Bigl(N+\frac{\alpha+\beta+1}{2}\Bigr)^2.
\end{equation}
Moreover, for $\alpha+\beta+1\ge0$,
\begin{equation}\label{max-ev2} M=\lambda_N=2N(N+\alpha+\beta+1).
\end{equation}
Furthermore, for all   $\alpha,\beta>-1$, $M(\alpha,\beta,1)=2(\alpha+\beta+2)$.
\end{lemma}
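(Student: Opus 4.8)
The plan is to treat $2j(2N+\alpha+\beta+1-j)$ as the restriction to the integers of a concave quadratic in a real variable and to locate its vertex. Writing $c:=2N+\alpha+\beta+1$ and $f(t):=2t(c-t)$ for $t\in\mathbb{R}$, the function $f$ is a downward parabola with vertex at $t^*:=c/2=N+\frac{\alpha+\beta+1}{2}$, where $c-t^*=t^*$ and hence $f(t^*)=2t^*(c-t^*)=2(t^*)^2$. By construction $M=\max_{1\le j\le N}f(j)$, so the whole statement reduces to a one-variable optimization of $f$ over the integer interval $\{1,\dots,N\}$.

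First I would prove \eqref{max-ev1}. Since $M$ is a maximum over a finite subset of $\mathbb{R}$, it is bounded above by the global maximum $f(t^*)$ of $f$ over all of $\mathbb{R}$, which gives at once
$$M\le f(t^*)=2\Bigl(N+\frac{\alpha+\beta+1}{2}\Bigr)^2.$$
This step needs no assumption on the sign of $\alpha+\beta+1$. Next, to establish \eqref{max-ev2} I would use the hypothesis $\alpha+\beta+1\ge0$: then $t^*=N+\frac{\alpha+\beta+1}{2}\ge N$, so the interval $[1,N]$ lies to the left of (or ends at) the vertex, on which $f$ is nondecreasing. Hence the discrete maximum is attained at the right endpoint $j=N$, and evaluating yields $M=f(N)=2N(c-N)=2N(N+\alpha+\beta+1)=\lambda_N$. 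Finally, for $N=1$ the index set $\{1,\dots,N\}$ collapses to the single point $j=1$, so $M(\alpha,\beta,1)=f(1)=2(2+\alpha+\beta)=2(\alpha+\beta+2)$ with no further work.

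The hard part will be essentially nonexistent, as this is elementary calculus once $f$ is identified as a concave quadratic. The only point requiring care is that $\alpha,\beta>-1$ forces merely $\alpha+\beta+1>-1$, which may be negative; this is exactly why the sharp identity \eqref{max-ev2} is stated under the extra hypothesis $\alpha+\beta+1\ge0$, which guarantees $t^*\ge N$ and thus that the maximum over $\{1,\dots,N\}$ sits at $j=N$. The weaker bound \eqref{max-ev1}, by contrast, holds unconditionally precisely because it compares the discrete maximum with the unconstrained continuous maximum $f(t^*)$, for which the location of $t^*$ relative to $[1,N]$ is irrelevant.
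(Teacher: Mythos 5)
Your argument is correct and is precisely the ``elementary calculus'' the paper invokes without writing out: viewing $\lambda_j=2j(2N+\alpha+\beta+1-j)$ as a concave quadratic in $j$, bounding $M$ by the vertex value for \eqref{max-ev1}, using $t^*\ge N$ under $\alpha+\beta+1\ge0$ to place the maximum at $j=N$ for \eqref{max-ev2}, and evaluating at $j=1$ for the $N=1$ case. Nothing is missing, and your remark that $\alpha,\beta>-1$ alone does not force $\alpha+\beta+1\ge0$ correctly explains why the extra hypothesis is needed for the identity but not for the bound.
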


The methods of Section 3 lead to the following results:

\begin{lemma}\label{distance-main-lemma-jac}
  Let $\alpha,\beta>-1$. For  $N\ge1$ and $i=1,\dots,N$, the ordered  roots $1>z_{1,N}>\ldots > z_{N,N}>-1$ of  $P_N^{(\alpha,\beta)}$ satisfy
  \begin{align}\label{est-jac01}
    \Bigl(4&\sum_{l; l\ne i}\frac{1-z_{i,N}^2}{(z_{i,N}-z_{l,N})^2}+2(\alpha+1)\frac{1+z_{i,N}}{1-z_{i,N}}+2(\beta+1)\frac{1-z_{i,N}}{1+z_{i,N}}\Bigr)^2
   \notag\\ &+16\sum_{l; l\ne i}\frac{(1-z_{l,N}^2)(1-z_{i,N}^2)}{(z_{i,N}-z_{l,N})^4}
    \quad\le  M^2.
 \end{align}
\end{lemma}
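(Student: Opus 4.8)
The plan is to run the argument of Sections 2 and 3, but now applied to the matrix $S_N$ of Theorem \ref{theoremCLT-transformed} \emph{directly}, without splitting off $I_N$ (in the Jacobi case the diagonal of $S_N$ carries no additive $1$, so the quantity of interest is the diagonal of $S_N^2$ rather than of $(S_N-I_N)^2$). The first step is to recognize that the left-hand side of (\ref{est-jac01}) is exactly the $i$-th diagonal entry of $S_N^2$. Writing $S_N^2=(s_{i,j}^{(2)})$, symmetry gives $s_{i,i}^{(2)}=\sum_{j=1}^N s_{i,j}^2=s_{i,i}^2+\sum_{j\ne i}s_{i,j}^2$. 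Substituting (\ref{entry-jac}) together with the parameter identities $a+b=\alpha+1$ and $b=\beta+1$ from (\ref{jac-par}), the term $s_{i,i}^2$ reproduces the squared bracket in (\ref{est-jac01}), while $s_{i,j}^2=16(1-z_{i,N}^2)(1-z_{j,N}^2)/(z_{i,N}-z_{j,N})^4$ reproduces the quartic sum. Thus (\ref{est-jac01}) is equivalent to the single inequality $s_{i,i}^{(2)}\le M^2$ for every $i$.

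To prove this I would invoke the eigenvalue data. By Theorem \ref{theoremCLT-transformed} the eigenvalues of $S_N$ are the positive numbers $\lambda_j=2j(2N+\alpha+\beta+1-j)$, and by definition $M=\max_j\lambda_j$ is the spectral radius. Hence $S_N^2$ has eigenvalues $\lambda_1^2,\dots,\lambda_N^2$, each bounded by $M^2$, so for every integer $r\ge0$
\[ tr(S_N^{2^{r+1}})=\sum_{j=1}^N\lambda_j^{2^{r+1}}\le N\,M^{2^{r+1}}=N\,(M^2)^{2^r}. \]
On the other hand $S_N^2$ is symmetric, so inequality (\ref{inequality-trace1}) of Lemma \ref{est-trace} applied to $B:=S_N^2$ yields
\[ tr\bigl((S_N^2)^{2^r}\bigr)=tr(S_N^{2^{r+1}})\ge\sum_{i=1}^N\bigl(s_{i,i}^{(2)}\bigr)^{2^r}. \]
Combining the two displays gives $\sum_{i=1}^N\bigl(s_{i,i}^{(2)}\bigr)^{2^r}\le N\,(M^2)^{2^r}$ for all $r\ge0$. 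In particular, for each fixed $i$ one has $\bigl(s_{i,i}^{(2)}/M^2\bigr)^{2^r}\le N$ for every $r$; as in the proof of Proposition \ref{distance-main-a}, the condition $0\le x^{2^r}\le C$ for all $r$ forces $x\le1$, whence $s_{i,i}^{(2)}\le M^2$, which is (\ref{est-jac01}).

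Since the scheme is entirely parallel to the Laguerre case, I expect no genuine obstacle beyond bookkeeping. The one step that needs care is the identification in the first paragraph: one must check that the off-diagonal contribution $\sum_{j\ne i}s_{i,j}^2$ produces the quartic sum with the correct factor $16$, and that the substitution $(a,b)\mapsto(\alpha+1,\beta+1)$ is carried out consistently so that $s_{i,i}$ matches the bracketed expression in (\ref{est-jac01}). I would also note that here it suffices to use the weaker estimate (\ref{inequality-trace1}) rather than (\ref{inequality-trace2}), since the target bound is merely $M^2$ and the hypothesis that $(1,\dots,1)^T$ be an eigenvector is not available for the Jacobi $S_N$; the price is that, unlike the Hermite case, no improvement of the constant beyond $M^2$ is obtained this way.
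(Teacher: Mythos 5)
Your proposal is correct and follows essentially the same route as the paper: identify the left-hand side of (\ref{est-jac01}) as the diagonal entry $s_{i,i}^{(2)}$ of $S_N^2$ (using $a+b=\alpha+1$, $b=\beta+1$ from (\ref{jac-par})), bound $tr(S_N^{2^r})\le N\cdot M^{2^r}$ via the eigenvalues, apply inequality (\ref{inequality-trace1}) of Lemma \ref{est-trace} to $B:=S_N^2$, and let $r\to\infty$ to force $s_{i,i}^{(2)}\le M^2$. Your remark that only the weaker estimate (\ref{inequality-trace1}) is available here is also exactly what the paper does.
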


\begin{proof}
By the definition of $M$, we have for all  integers  $r\ge0$,
\begin{equation}\label{est-j1}
    tr(S_N^{2^r})\le N \cdot M^{2^r}.
\end{equation}
On the other hand, if we write $S_N^2=:(s_{i,j}^{(2)})_{i,j=1,\ldots,N}$, we obtain from Theorem \ref{theoremCLT-transformed}
that for $i=1,\ldots,N$, $s_{i,i}^{(2)}$ is equal to the left hand side of (\ref{est-jac01}).
    Moreover, (\ref{est-j1}) and  Lemma \ref{est-trace} for $B:=S_N^2$ lead to
\begin{equation}\label{est-j2}
 N\cdot (M^2)^{2^{r-1}} \ge tr(S_N^{2^r})\ge\sum_{i=1}^N(s_{i,i}^{(2)})^{2^{r-1}} \end{equation}
for all  integers  $r\ge1$, which implies that $s_{i,i}^{(2)}\le M^2$ for all $i$ as claimed. 
\end{proof}

Lemma \ref{distance-main-lemma-jac} has the following consequences:

\begin{corollary}\label{distance-cor-jac1}
  For $\alpha,\beta>-1$ and $N\ge1$,
  \begin{equation}\label{est-j2-cor1}
    1-z_{N,N}\ge\frac{ 8(\alpha+1)}{ M + 4(\alpha+1)+ \sqrt{M^2-16(\alpha+1)(\beta+1)}}
    \ge \frac{ 4(\alpha+1)}{M+2(\alpha+1)} \end{equation}
  and
  \begin{equation}\label{est-j2-cor2}
    1+z_{1,N}\ge\frac{ 8(\beta+1)}{ M + 4(\beta+1)+ \sqrt{M^2-16(\alpha+1)(\beta+1)}}
\ge \frac{ 4(\beta+1)}{M+2(\beta+1)}
    .\end{equation}
\end{corollary}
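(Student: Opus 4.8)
The plan is to reduce Lemma~\ref{distance-main-lemma-jac} to the single scalar inequality $s_{i,i}\le M$ and then to isolate the two boundary terms of the diagonal entry $s_{i,i}$, turning the estimate into a quadratic inequality that I can solve explicitly for $1-z_{i,N}$ and for $1+z_{i,N}$. First I would observe that the left-hand side of (\ref{est-jac01}) dominates $s_{i,i}^2$, because the second sum there is non-negative; since every summand in the diagonal entry (\ref{entry-jac}) is non-negative for $-1<z_{i,N}<1$, we have $s_{i,i}\ge 0$, so Lemma~\ref{distance-main-lemma-jac} yields $s_{i,i}\le M$ for every $i$. Dropping the non-negative term $4\sum_{l\ne i}(1-z_{i,N}^2)/(z_{i,N}-z_{l,N})^2$ leaves the two-term bound
\[
 2(\alpha+1)\frac{1+z_{i,N}}{1-z_{i,N}}+2(\beta+1)\frac{1-z_{i,N}}{1+z_{i,N}}\le M .
\]

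For the first estimate I would set $p:=\frac{1+z_{i,N}}{1-z_{i,N}}>0$, so that the above reads $2(\alpha+1)p+2(\beta+1)/p\le M$, i.e.\ $2(\alpha+1)p^{2}-Mp+2(\beta+1)\le0$. A positive $p$ solving this forces the discriminant $M^{2}-16(\alpha+1)(\beta+1)$ to be non-negative and confines $p$ between the two roots, so in particular $p\le\bigl(M+\sqrt{M^{2}-16(\alpha+1)(\beta+1)}\bigr)/(4(\alpha+1))$. Because $1-z_{i,N}=2/(p+1)$ is decreasing in $p$, substituting this upper bound and simplifying gives the first inequality of (\ref{est-j2-cor1}) for every $i$, hence for $i=N$; the cruder second inequality then follows from $\sqrt{M^{2}-16(\alpha+1)(\beta+1)}\le M$. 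The estimate (\ref{est-j2-cor2}) is entirely analogous with $\alpha$ and $\beta$ exchanged: writing $q:=\frac{1-z_{i,N}}{1+z_{i,N}}>0$ gives $2(\beta+1)q^{2}-Mq+2(\alpha+1)\le0$, and $1+z_{i,N}=2/(q+1)$ together with the analogous root bound yields (\ref{est-j2-cor2}) for every $i$, in particular for $i=1$.

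I do not anticipate a real obstacle; once $s_{i,i}\le M$ is available the remainder is elementary calculus, exactly in the spirit of the proof of Proposition~\ref{distance-main-theorem-b}. The only points needing care are bookkeeping: one must take the larger root of the quadratic (so that the monotonicity of $2/(p+1)$ produces a lower, not an upper, bound for $1-z_{i,N}$), and one should note that the non-negativity of the discriminant need not be checked separately, since it is guaranteed by the existence of the positive solution $p=\frac{1+z_{i,N}}{1-z_{i,N}}$. The passage from the sharp bounds to the simpler ones is then just the trivial estimate $\sqrt{M^{2}-16(\alpha+1)(\beta+1)}\le M$.
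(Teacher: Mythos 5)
Your proposal is correct and follows essentially the same route as the paper: both extract the inequality $2(\alpha+1)\frac{1+z_{i,N}}{1-z_{i,N}}+2(\beta+1)\frac{1-z_{i,N}}{1+z_{i,N}}\le M$ from Lemma \ref{distance-main-lemma-jac} and then solve the resulting quadratic inequality, the only difference being your substitution $p=(1+z_{i,N})/(1-z_{i,N})$ in place of the paper's $x=1-z_{i,N}$, which avoids the rationalization of the smaller root but is otherwise equivalent.
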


\begin{proof} Lemma \ref{distance-main-lemma-jac} implies that
  \begin{equation}\label{help-jac1}
    2(\alpha+1)\frac{1+z_{i,N}}{1-z_{i,N}}+2(\beta+1)\frac{1-z_{i,N}}{1+z_{i,N}}\le M.
    \end{equation}
  Thus $x:=1-z_{1,N}\in ]0,2[$ satisfies
$$2(\alpha+1)\frac{2-x}{x}+2(\beta+1)\frac{x}{2-x}\le M$$
    which is, by elementary calculus, equivalent to
    $$(2(\alpha+\beta+2)+M)x^2 -2(4(\alpha+1)+M)x + 8(\alpha+1)\le0.$$
    This yields that
    $$x\in [x_-,x_+] \quad\text{for}\quad x_{\pm}=
    \frac{M + 4(\alpha+1)\pm \sqrt{M^2-16(\alpha+1)(\beta+1)}}{M+2(\alpha+\beta+2)}.$$
    As
    \begin{align}
x_-&= \frac{(M + 4(\alpha+1))^2- M^2+16(\alpha+1)(\beta+1)}{\Bigl(M+2(\alpha+\beta+2)\Bigr)
      \Bigl(M + 4(\alpha+1)+ \sqrt{M^2-16(\alpha+1)(\beta+1)} \Bigr) }\notag\\
    &=\frac{ 8(\alpha+1)}{
  M + 4(\alpha+1)+ \sqrt{M^2-16(\alpha+1)(\beta+1)}  },\notag
\end{align}
    the first $\ge$ in (\ref{est-j2-cor1}) follows. The second t $\ge$ there is obvious.
    Moreover,  (\ref{est-j2-cor2}) follows in the same way.
\end{proof}

\begin{remark}\label{rem-jac1}
\begin{enumerate}
\item[\rm{(1)}] The first $\ge$  in (\ref{est-j2-cor1}) and  (\ref{est-j2-cor2})
  are equalities for $N=1$ by the explicit form of  $P_N^{(\alpha,\beta)}$ in (4.21.2) of \cite{S}.
\item[\rm{(2)}] We briefly compare  (\ref{est-j2-cor1}) and  (\ref{est-j2-cor2}) with other known estimates.
  We first notice that  Theorem 6.3.2 of \cite{S} leads to an estimate for  $-1/2\le \alpha,\beta\le 1/2$, which in most cases
  is better than
  (\ref{est-j2-cor1}) and  (\ref{est-j2-cor2}) in this restricted case.

  Moreover, the asymptotic result 18.16.8 in \cite{NIST} for fixed $\alpha,\beta>-1/2$ and $N\to\infty$
  in combination with the estimate Eq.~(5) in Section 15.3 of Watson \cite{W} on the first zeros  of the Bessel functions
  $J_{\alpha}$ imply that
   \begin{equation}\label{est-j2-asympt}
    1-z_{N,N}\ge  \frac{\alpha(\alpha+2)}{2(N+(\alpha+\beta+1)/2)^2}+o(1/N^2). \end{equation}
   If we compare  this with (\ref{est-j2-cor1}), we obtain that, for fixed $\alpha,\beta$ and large $N$,
   (\ref{est-j2-asympt}) is better than  (\ref{est-j2-cor1})  for $\alpha> 1+\sqrt 5$, while the converse holds for 
   $-1/2<\alpha< 1+\sqrt 5$.
\end{enumerate}
\end{remark}

The preceding estimates also have the following variant:

\begin{corollary}\label{distance-cor-jac3} For all $i=1,\ldots,N$,
  \begin{equation}\label{est-j4}
    1-z_{i,N}^2\ge 2 \frac{ \min(\alpha+1,\beta+1)}{M}.
  \end{equation}
  Moreover, for $\alpha=\beta>-1$, 
  \begin{equation}\label{est-j5}
    1-z_{i,N}^2\ge \frac{ 8(\alpha+1)}{M+4(\alpha+1)}. \end{equation}
\end{corollary}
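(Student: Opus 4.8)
The plan is to take Lemma~\ref{distance-main-lemma-jac} as the sole input and, exactly as in the proof of Corollary~\ref{distance-cor-jac1}, to discard the nonnegative sum $16\sum_{l\ne i}(\cdots)$ inside (\ref{est-jac01}) and take square roots. Since $-1<z_{i,N}<1$ and $\alpha,\beta>-1$, every summand in (\ref{est-jac01}) is nonnegative, so these two operations are legitimate and reproduce inequality (\ref{help-jac1}),
$$2(\alpha+1)\frac{1+z_{i,N}}{1-z_{i,N}}+2(\beta+1)\frac{1-z_{i,N}}{1+z_{i,N}}\le M,$$
valid for every $i$. Writing $z:=z_{i,N}$ for brevity, this is all I shall use.

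To establish (\ref{est-j4}), I would clear denominators: multiplying the last display by $1-z^2=(1-z)(1+z)>0$ gives
$$2(\alpha+1)(1+z)^2+2(\beta+1)(1-z)^2\le M(1-z^2).$$
Setting $m:=\min(\alpha+1,\beta+1)$ and bounding the left-hand side from below by replacing both coefficients with $m$, I would use the elementary identity $(1+z)^2+(1-z)^2=2(1+z^2)$ to obtain
$$M(1-z^2)\ge 2m\bigl((1+z)^2+(1-z)^2\bigr)=4m(1+z^2)\ge 2m,$$
which is precisely (\ref{est-j4}). (The same line in fact yields the slightly stronger bound $1-z^2\ge 4m/M$, since $1+z^2\ge1$.)

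For (\ref{est-j5}) I would instead exploit the symmetry $\alpha=\beta$ and merge the two fractions in (\ref{help-jac1}) via
$$\frac{1+z}{1-z}+\frac{1-z}{1+z}=\frac{(1+z)^2+(1-z)^2}{1-z^2}=\frac{2(1+z^2)}{1-z^2}=\frac{4}{1-z^2}-2.$$
Then (\ref{help-jac1}) becomes $2(\alpha+1)\bigl(\tfrac{4}{1-z^2}-2\bigr)\le M$, i.e.\ $\tfrac{8(\alpha+1)}{1-z^2}\le M+4(\alpha+1)$, and solving this linear inequality for $1-z^2$ gives exactly (\ref{est-j5}).

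I do not anticipate a genuine obstacle: once (\ref{help-jac1}) is in hand, everything reduces to elementary algebra. The only points needing care are the justification of the sign conditions used to drop the sum and to take square roots, and the choice of the right manipulation in each case — multiplying through by $1-z^2$ and invoking the $\min$ for the general estimate (\ref{est-j4}), versus recognizing the collapse $\frac{1+z}{1-z}+\frac{1-z}{1+z}=\frac{4}{1-z^2}-2$ in the symmetric case (\ref{est-j5}).
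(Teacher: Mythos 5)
Your proof is correct and follows essentially the same route as the paper: both derive (\ref{help-jac1}) from Lemma \ref{distance-main-lemma-jac}, clear denominators and use the $\min$ together with $(1+z)^2+(1-z)^2=2(1+z^2)$ for (\ref{est-j4}), and rewrite the symmetric case as $4(\alpha+1)(1+z^2)\le M(1-z^2)$ for (\ref{est-j5}). Your side remark that the same computation actually gives the stronger bound $1-z_{i,N}^2\ge 4\min(\alpha+1,\beta+1)/M$ is a valid observation, since the paper only retains the weaker constant $2$ in (\ref{est-j4}).
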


\begin{proof} Eq.~(\ref{help-jac1}) implies 
  $$  M(1-z_{i,N}^2)\ge  2(\alpha+1)(1+z_{i,N})^2+2(\beta+1)(1-z_{i,N})^2\ge2\cdot \min(\alpha+1,\beta+1)$$
  and thus (\ref{est-j4}).  Moreover, for $\alpha=\beta>-1$, Eq.~(\ref{help-jac1}) leads to
  $$4(\alpha+1)(1+z_{i,N}^2)\le M(1-z_{i,N}^2)$$
and thus to (\ref{est-j5}).
\end{proof}
  
We next estimate the distances of consecutive roots.

 \begin{theorem}\label{distance-main-theorem-jac}
  Let  $\alpha,\beta>-1$. For all $N\ge2$, the ordered roots $-1<z_{1,N}<\ldots < z_{N,N}<1$ of  $P_N^{(\alpha,\beta)}$   satisfy
  \begin{equation}\label{est-j03}  z_{i+1,N}- z_{i,N}\ge\frac{2^{7/4}}{M}\cdot (\min(\alpha+1,\beta+1))^{1/2}
 \quad (i=1,\dots,N-1).
  \end{equation}
  Moreover, for  $\alpha=\beta>-1$,
\begin{equation}\label{est-j04}  z_{i+1,N}- z_{i,N}\ge\frac{2^{11/4}(\alpha+1)^{1/2}}{\sqrt{M(M+4(\alpha+1))}}
 \quad (i=1,\dots,N-1).
  \end{equation}
  \end{theorem}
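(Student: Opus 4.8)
The plan is to mirror the proof of Proposition \ref{distance-main-theorem-b} in the Laguerre case, now using Lemma \ref{distance-main-lemma-jac} as the source inequality and Corollary \ref{distance-cor-jac3} for the boundary control of $1-z_{i,N}^2$. Throughout I use the convention $z_{1,N}<\dots<z_{N,N}$, so that $z_{i+1,N}-z_{i,N}>0$, and I apply (\ref{est-jac01}) at the fixed index $i$, retaining only the nearest-neighbour ($l=i+1$) contributions on the left-hand side.

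Since $z_{j,N}\in(-1,1)$ forces $1-z_{j,N}^2>0$, every summand in $4\sum_{l\ne i}(1-z_{i,N}^2)/(z_{i,N}-z_{l,N})^2$ is positive, as are the two boundary terms $2(\alpha+1)(1+z_{i,N})/(1-z_{i,N})$ and $2(\beta+1)(1-z_{i,N})/(1+z_{i,N})$; hence the bracketed factor in the first term of (\ref{est-jac01}) is at least $4(1-z_{i,N}^2)/(z_{i+1,N}-z_{i,N})^2$, and its square is at least $16(1-z_{i,N}^2)^2/(z_{i+1,N}-z_{i,N})^4$. Retaining also the single term $l=i+1$ in the second sum, I obtain
\[
M^2\ \ge\ 16\frac{(1-z_{i,N}^2)^2+(1-z_{i+1,N}^2)(1-z_{i,N}^2)}{(z_{i+1,N}-z_{i,N})^4}
=16\frac{(1-z_{i,N}^2)\bigl((1-z_{i,N}^2)+(1-z_{i+1,N}^2)\bigr)}{(z_{i+1,N}-z_{i,N})^4}.
\]

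It then remains to insert the boundary bounds. For the general case I would apply (\ref{est-j4}) to both factors, giving $1-z_{i,N}^2\ge 2\min(\alpha+1,\beta+1)/M$ and $(1-z_{i,N}^2)+(1-z_{i+1,N}^2)\ge 4\min(\alpha+1,\beta+1)/M$; their product is at least $8\min(\alpha+1,\beta+1)^2/M^2$, so $(z_{i+1,N}-z_{i,N})^4\ge 128\min(\alpha+1,\beta+1)^2/M^4$, and taking fourth roots (with $128=2^7$) yields (\ref{est-j03}). For $\alpha=\beta$ I would instead feed (\ref{est-j5}) into the same numerator, writing $1-z_{i,N}^2\ge 8(\alpha+1)/(M+4(\alpha+1))=:B$, so the numerator is $\ge 2B^2$ and $(z_{i+1,N}-z_{i,N})^4\ge 32B^2/M^2$; substituting $B$ and collecting the powers of $2$ (here $2^{5/4}\cdot 2^{3/2}=2^{11/4}$) produces (\ref{est-j04}).

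The step I expect to be decisive—and the one that is easy to get wrong—is the bookkeeping of constants in the extraction. It is tempting to keep only the quartic sum (the second term of (\ref{est-jac01})), as in a crude nearest-neighbour argument; but doing so and then applying (\ref{est-j4}) loses a factor $2^{1/4}$ and gives only $2^{3/2}$ in place of $2^{7/4}$. The stated constant is recovered precisely because the \emph{squared} term also contributes a full copy of $16(1-z_{i,N}^2)^2/(z_{i+1,N}-z_{i,N})^4$, so that the numerator becomes $(1-z_{i,N}^2)$ times the \emph{sum} $(1-z_{i,N}^2)+(1-z_{i+1,N}^2)$, which is where the extra factor $2$ under the fourth root originates. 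Apart from this, the only points requiring care are the strict positivity of the discarded terms (needed to justify dropping them) and the consistent use of the ordering convention so that the relevant neighbour index is $l=i+1$.
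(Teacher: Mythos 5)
Your proposal is correct and follows essentially the same route as the paper: both drop the cross terms in the squared bracket of (\ref{est-jac01}) to obtain $16\sum_{l\ne i}\bigl((1-z_{i,N}^2)^2+(1-z_{l,N}^2)(1-z_{i,N}^2)\bigr)/(z_{i,N}-z_{l,N})^4\le M^2$, restrict to the neighbour $l=i+1$, and then insert (\ref{est-j4}) (resp.\ (\ref{est-j5}) for $\alpha=\beta$). Your constant bookkeeping ($2^7$ under the fourth root, and $2^{5/4}\cdot 2^{3/2}=2^{11/4}$) matches the paper exactly.
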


 \begin{proof} Lemma \ref{distance-main-lemma-jac} yields that
   $$ 16\sum_{l; l\ne i}
   \frac{(1-z_{i,N}^2)^2+(1-z_{l,N}^2)(1-z_{i,N}^2) }{(z_{i,N}-z_{l,N})^4}
    \le  M^2.$$
    This and (\ref{est-j4}) lead to
    $$2^7 \cdot \frac{(\min(\alpha+1,\beta+1))^2}{M^2(z_{i,N}-z_{i+1,N})^4} \le  M^2$$
and thus to (\ref{est-j03}). In the same way, (\ref{est-j5}) leads to (\ref{est-j04}).
 \end{proof}

\begin{remark}\label{remark-jacobi-end}
\begin{enumerate}
\item[\rm{(1)}]  If one compares Theorem \ref{distance-main-theorem-jac} with 
     Theorem 6.3.1 of \cite{S} for $-1/2\le \alpha,\beta\le 1/2$,
    then in most cases the estimate in  \cite{S} is again the better one under this restricton.
  \item[\rm{(2)}] If one uses the asymptotic result (\ref{est-j2-asympt}) for $\alpha,\beta$ fixed and $N$ large in the proof
    of the preceding theorem instead of (\ref{est-j4}), one gets an asymptotic modifications of Theorem \ref{distance-main-theorem-jac}
    where the asymptotic rate is slightly better than in (\ref{est-j03}) for large  $\alpha,\beta$.
\item[\rm{(2)}] In summary, we have the impression that our approach here
     on Jacobi ensembles and polynomials should be rewritten in some trigonometric form
    similar to the square-root-form in  Theorem  \ref{distance-main-theorem-b-wurzel} in the Laguerre case.
 Unfortunately, we were not able to transform
  the entries (\ref{entry-jac}) in the inverse covariance matrices $S_N$ above in a trigonometric and useful way.
\end{enumerate}
\end{remark}

\end{document}